\documentclass[10pt]{article}
\usepackage{amsmath,amsfonts,amssymb,amsthm}
\usepackage[utf8]{inputenc}
\usepackage{graphicx}
\usepackage{color}
\usepackage{tikz}
\usepackage{array}
\usepackage{url}
\usepackage{subcaption}
\usepackage{mathtools}

\usetikzlibrary{patterns,math,arrows.meta}
\usepackage{hyperref}
\usepackage{cleveref}

\usepackage{titling}
\setlength{\droptitle}{-2cm}

\makeatletter
\pgfdeclarepatternformonly{south east lines}{\pgfqpoint{-0pt}{-0pt}}{\pgfqpoint{3pt}{3pt}}{\pgfqpoint{3pt}{3pt}}{
    \pgfsetlinewidth{0.4pt}
    \pgfpathmoveto{\pgfqpoint{0pt}{3pt}}
    \pgfpathlineto{\pgfqpoint{3pt}{0pt}}
    \pgfpathmoveto{\pgfqpoint{.2pt}{-.2pt}}
    \pgfpathlineto{\pgfqpoint{-.2pt}{.2pt}}
    \pgfpathmoveto{\pgfqpoint{3.2pt}{2.8pt}}
    \pgfpathlineto{\pgfqpoint{2.8pt}{3.2pt}}
    \pgfusepath{stroke}}
\makeatother

\newdimen\GridSize
\tikzset{
	GridSize/.code={\GridSize=#1},
	GridSize=3pt
}

\oddsidemargin 1.3 cm \evensidemargin 1.3 cm
\textwidth 14.8cm

\newtheorem{theorem}{Theorem}[section]
\newtheorem{definition}{Definition}[section]
\newtheorem{proposition}[theorem]{Proposition}

\newtheorem{assumption}[theorem]{Assumption}
\theoremstyle{definition}
\newtheorem{example}[definition]{Example}
\newtheorem{remark}[theorem]{Remark}

\DeclareMathOperator{\rank}{rank}

\newcommand{\R}{\mathbb{R}}
\newcommand{\C}{\mathbb{C}}

\newcommand{\ie}{i.e.~}
\newcommand{\minus}{\scalebox{0.9}[1.0]{\( \,-\, \)}}


\title{Analysis and Design of Strongly Stabilizing PID Controllers for Time-Delay Systems}
\author{
{Pieter Appeltans}\\
{KU Leuven, Department of Computer Science, NUMA Section}\\
{Leuven, Belgium.}\\
{Email: {\tt Pieter.Appeltans@cs.kuleuven.be}} \medskip\\
{Silviu-Iulian Niculescu}\\
{Universit\'{e} Paris-Saclay, CNRS, CentraleSup\'{e}lec,}\\ {Laboratory of Signals and Systems (L2S), Inria Team ``DISCO''}\\ 
{Gif-sur-Yvette, France.}\\
{E-mail: {\tt Silviu.Niculescu@l2s.centralesupelec.fr}} \medskip \\
{Wim Michiels}\\
{KU Leuven, Department of Computer Science, NUMA Section}\\
{Leuven, Belgium.} \\
{Email: {\tt Wim.Michiels@cs.kuleuven.be}} \medskip\\
}

\date{}

\begin{document}
\maketitle
\begin{abstract}
    This paper presents the analysis of the stability properties of PID controllers for dynamical systems with multiple state delays, focusing on the mathematical characterization of the potential sensitivity of stability with respect to infinitesimal parametric perturbations. These perturbations originate for instance from neglecting feedback delay, a finite difference approximation of the derivative action, or neglecting fast dynamics. The analysis of these potential sensitivity problems leads us to the introduction of a `robustified' notion of stability called \emph{strong stability}, inspired by the corresponding notion for neutral functional differential equations. We prove that strong stability can be achieved by adding a low-pass filter with a sufficiently large cut-off frequency to the control loop, on the condition that the filter itself does not destabilize the nominal closed-loop system. Throughout the paper, the theoretical results are illustrated by examples that can be analyzed analytically, including, among others, a third-order unstable system where both proportional and derivative control action are necessary for achieving stability, while the regions in the gain parameter-space for stability and strong stability are not identical. Besides the analysis of strong stability, a computational procedure is provided for designing strongly stabilizing PID controllers.  Computational case-studies illustrating this design procedure complete the presentation.
\end{abstract}
{
\small
\textbf{\textit{Keywords ---}} PID control, Stability, Modelling uncertainty, Robustness. \smallskip \\
\textbf{\textit{AMS subject classifications} ---} 93B35, 93B52,93C23,93D15,93D22.
}
\section{Introduction}
In this paper we analyze proportional-integral-derivative (PID) output feedback control of multiple-input-multiple-output (MIMO) linear time invariant (LTI) dynamical systems with discrete state delays. PID controllers are used in many control applications and are well-established in industry due to their implementation simplicity. Literature extending the PID control framework to systems with delays includes, among others, \cite{silva2007,Gundes2007,Ozbay2006,Morarescu2011,Fiser2018,Zitek2019,Zitek2020,Rodriguez2019,Ma2019} and the references therein, and focuses on an analytic characterization of the stabilization and stabilizability of (low-order) systems with input/output delay.

Central in the presented work is the analysis of the sensitivity of stability of the closed-loop system with respect to arbitrarily small modelling errors, which include for instance neglected feedback delay, a finite-difference approximation of the derivative and neglected high-frequency behavior. Similar sensitivity problems are examined in \cite{Georgiou1989}, in which the concept of \emph{w-stability\/} is introduced. w-Stability requires that the closed-loop system remains stable for sufficiently small perturbations, where the considered perturbation class consists of approximate identities (see \cite[Section V.C]{Zames1981} for a definition of approximate identities). Of interest for this work, is Section 4 which examines non-proper rational controllers, such as PID controllers, for the control of rational (finite-dimensional) plants. More specifically, Theorem 2 in \cite{Georgiou1989} suggests that  w-stability can be induced by adding a low-pass filter with a sufficiently large cut-off frequency to the control loop. However, as suggested by \cite[Theorem 3]{Georgiou1989}
such a low-pass filter can itself destroy the stability of the nominal closed-loop system. In this work, we will derive similar results but we will focus on time-delay systems, which are infinite dimensional, and PID control. Furthermore, we will use a different perturbation class and besides perturbations on the input and output channels, the framework presented here also includes a perturbation inside the (derivative part of the) controller. This additional perturbation is motivated by the implementation of the derivative using a finite-difference approximation.

 Another important starting point is article~\cite{Michiels2009}, which examines the stabilizability of a controllable LTI system using state-derivative feedback control (i.e.~using a control law of the form $u(t)=K \dot x(t)$ instead of the conventional state feedback $u(t)=K x(t))$. While the nominal system can be stabilized if and only if the system matrix of the open-loop system has no zero eigenvalue, the stability can be destroyed by infinitesimal perturbations if the open-loop system matrix has an odd number of eigenvalues in the open-right half plane (the so-called odd number limitation). However, if this number  of eigenvalues is even, then robust stability in the presence  of sufficiently small perturbations can always be achieved by adding a low-pass filter to the control law.  

Finally, there exists an abundant literature that addresses fragility problems with respect to infinitesimal perturbations,
that include, among others, \cite{Logemann1996b,datko1991,Hale1995,Morgul1995,Logemann1998} (and the references therein) discussing the sensitivity of stability of controlled systems with respect to feedback delay, \cite{Hale2002,Michiels2002,Logemann1996} (and the references therein) discussing the sensitivity of stability of neutral functional differential equations with respect to infinitesimal (changes on the) delays, and \cite{Sipahi2006} discussing the fragility of a gradient play dynamics model against a derivative approximation.
%
%


In this work we will extend the theoretical framework of \cite{Michiels2009} to systems with state delays controlled by PID output feedback, and complement it with an algorithm for the design of the controller gain matrices. More specifically, after introducing the considered set-up and illustrating the aforementioned sensitivity problems in \Cref{sec:problem_statement}, we derive conditions on the derivative gain matrix under which a stable closed-loop system loses stability under arbitrarily small feedback delay (\Cref{subsec:losing_stability_feedback_delay}) and under an arbitrary close finite-difference approximation of the derivative (\cref{subsec:losing_stability_finite_difference}). In \Cref{sec:strong_stability}, we introduce the notion of \emph{strong stability}, in analogy with the corresponding notion for neutral functional differential equations \cite{Hale2002}, which requires the closed-loop system to remain stable under sufficiently small perturbations. As in \cite{Michiels2009}, we will show that a strongly stable closed-loop can be obtained by adding a low-pass filter to the control loop, under the condition that this low-pass filter does not destabilize the system, which induces an algebraic constraint on the derivative gain matrix. Subsequently, \Cref{sec:numerical_algorithm} presents a computational procedure to design strongly stabilizing PID controllers with low pass filtering. This procedure consists of minimizing the spectral abscissa of the nominal (\ie{}without low-pass filter) closed-loop system in function of the elements of the feedback matrices under the aforementioned constraint on the derivative gain matrix. The presented method thus fits in the direct optimization framework as, for example, used in \cite{Michiels2011,Dileep2018}. This framework has the advantage that a particular structure of the feedback matrices can be easily incorporated in the design procedure. This comes, however, at the cost of having to solve a small non-convex non-smooth optimization problem. \Cref{sec:generalization} presents some remarks on how to generalize the presented results to systems with input delay and to systems with bounded uncertainties on the system matrices and the state delays. Finally, \Cref{sec:conclusion} concludes the paper.

\section{Problem statement}
\label{sec:problem_statement}
This work considers LTI systems with discrete state delays of the form: 
\begin{equation}
\label{eq:sys}
\left\{\begin{array}{l}
\dot x(t) = A_0\, x(t) + \sum_{k=1}^{K} A_k\, x(t-\tau_k) +B\, u(t),\\
 y(t)=C\, x(t) 
\end{array}\right.
\end{equation}
with $x\in\mathbb{R}^n$ the internal state, $\ u\in\mathbb{R}^m$ the input,$\ y\in\mathbb{R}^p$ the output, $0<\tau_1<\dots<\tau_K<+\infty$ discrete delays, $A_0$, $A_1$, \dots, $A_K \in \R^{n\times n}$, $B\in \R^{n\times m}$ and $C\in\R^{p\times n}$. Furthermore, without loss of generality, we will assume that $B$ is of full column rank and that $C$ is of full row rank. The goal is to design a PID output feedback control law
\begin{equation}
\label{eq:pid_control}
u(t)= K_p\, y(t)+ K_d\, \dot y(t) + K_i \int_0^{t} y(s) ds,
\end{equation}
with $K_p$, $K_d$ and $K_i$ real-valued $m\times p$ matrices, that exponentially stabilizes the system. We will examine the exponential stability of the closed-loop system consisting of \eqref{eq:sys} and \eqref{eq:pid_control} using the following characteristic function
\begin{equation}
\label{eq:characteristic_function}
H_0(\lambda) :=
\det
\left(
\lambda
\begin{bmatrix}
I_n-BK_d C & 0\\
0 & I_q 
\end{bmatrix}-
\begin{bmatrix}
A_0+BK_p C & BU_i \\
V_i C & 0
\end{bmatrix}-
\sum_{k=1}^{K} \begin{bmatrix}
A_k & 0 \\
0 & 0
\end{bmatrix}e^{-\lambda \tau_k}
\right)
\end{equation}
with $I_n$ the identity matrix of size $n$, $q=\rank(K_i)$ and $K_i = U_i V_i$, a rank revealing decomposition with $U_i\in \R^{m\times q}$ of full column rank and $V_i \in \R^{q\times p}$ of full row rank. More specifically, the closed-loop system is exponentially stable if and only if all roots of \eqref{eq:characteristic_function} lie inside the open left half-plane bounded away from the imaginary axis. However, the following example illustrates that a stable closed-loop system can loose stability due to arbitrarily small implementation errors.

\begin{example}
\label{ex:2nd_order}
Consider the following second-order, single-input single-output system:
\begin{equation*}
\label{example_second_order_plant}
\left\{
\begin{array}{rcl}
\dot{x}(t) &=& \begin{bmatrix}
0 & \omega_0 \\
\omega_0 & 0
\end{bmatrix}\ x(t) + \begin{bmatrix}
-1 \\
0
\end{bmatrix}\ u(t)\\[1em]
y(t) &=& \begin{bmatrix}
1 & 0
\end{bmatrix}\ x(t)
\end{array}
\right.
\end{equation*}
with as corresponding transfer function 
$
-s/(s^2-\omega_0^2).
$
We want to control this system using the feedback law $u(t)=k_p y(t)+k_d \dot{y}(t)$, that is nothing else than a PD controller. For this set-up, characteristic function \eqref{eq:characteristic_function} reduces to the following second-order polynomial:
\begin{equation*}
(1+k_d)\lambda^2+k_p \lambda-\omega_0^2.
\end{equation*}
It follows from the Routh-Hurwitz stability criterion that the roots of this polynomial lie inside the open left half-plane if (and only if) $k_d<-1$ and $k_p<0$. The system is thus stabilized by any control law with $k_d<-1$ and $k_p<0$. Next, we examine the effect of delayed feedback and a finite-difference approximation of the derivative on the stability of the closed-loop and we will show that any stable closed-loop system loses stability under these implementation errors, even when the size of the error is arbitrarily small.

As first implementation error, we consider delayed feedback, \ie{}$u(t)=k_p y(t-r)+k_d \dot{y}(t-r)$ with $r>0$. In this case, the exponential stability of the closed-loop system is characterized by the roots of the following quasi-polynomial
\begin{equation}
\label{eq:example_2nd_order_feedback_delay}
(1+k_d e^{-\lambda r})\lambda^2 + k_p \lambda e^{-\lambda r} - \omega_0^2.
\end{equation}
It follows from \cite[Proposition 1.28]{Michiels2014} that  \eqref{eq:example_2nd_order_feedback_delay} has a sequence of roots $\{\lambda_k\}_{k=1}^{\infty}$ which satisfies
\[
\lim\limits_{k\to \infty} |\Im(\lambda_k)| = +\infty \text{ and } \lim\limits_{k\to \infty} \Re(\lambda_k) = \frac{1}{r}\ln\left(|k_d|\right).
\]
Characteristic function \eqref{eq:example_2nd_order_feedback_delay} thus has (infinitely many) roots in the closed right half-plane for any $k_d < -1$ and any $r>0$. Under delayed feedback, the closed-loop system is thus no longer stable, even if the feedback delay is arbitrarily small. 
As second implementation error, we consider a finite-difference approximation of the derivative:
\begin{equation*}
\label{finite_difference_approximate}
\dot{y}(t) \approx \frac{y(t)-y(t-r)}{r},
\end{equation*}
with $r>0$. Such an approximation might be necessary if there is no sensor to measure the actual derivative. For the considered example, the exponential stability of the closed-loop system is now characterized by the roots of
\begin{equation}
\label{eq:example_2nd_order_finite_difference}
f(\lambda) := \lambda^2 + \left(k_p + k_d \frac{1-e^{-\lambda r}}{r}\right) \lambda - \omega_0 ^2.
\end{equation}
This characteristic function can be rewritten as $f(\lambda) = \lambda^{2} \Big(f_1(\lambda)-f_2(\lambda) \Big)$ with
\[
f_1(\lambda) :=  1 + \frac{k_p}{\lambda} - \frac{\omega_0^2}{\lambda^2}
\text{\ \ and\ \ } 
f_2(\lambda) := -k_d  \frac{1-e^{-\lambda r}}{\lambda r}.
\]
As $\lambda = 0$ is not a root of $f(\cdot)$, the roots of \eqref{eq:example_2nd_order_finite_difference} thus correspond to the crossings of $f_1(\cdot)$ and $f_2(\cdot)$ in the complex plane.  \Cref{fig:introduction_example} shows these functions for $\lambda$ real-valued and positive, $k_d<-1$, $k_p<0$ and $r>0$. In this case, characteristic function \eqref{eq:example_2nd_order_finite_difference} thus has a real-valued root in the right half-plane for all $r>0$. Furthermore, this root  moves to $+\infty$ as $r\to0+$. So also under an arbitrary accurate finite-difference approximation of the derivative, stability is lost.  \hfill $\diamond$
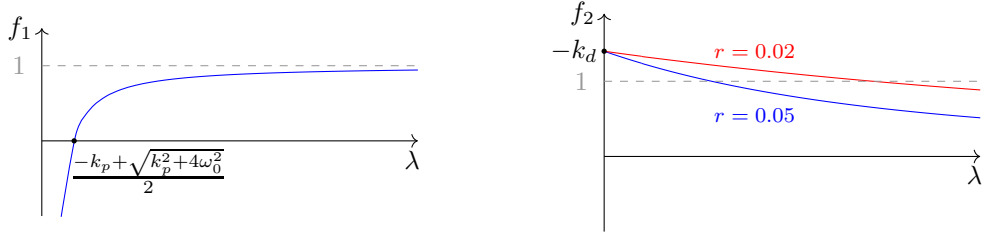
\begin{figure}[!htbp]
\begin{minipage}[c]{0.48\textwidth}
	\begin{tikzpicture}
		\draw[->] (0, 0) -- (5, 0) node[below,xshift=-0.5ex] {$\lambda$};
		\draw[->] (0, -1) -- (0, 1.5) node[left] {$f_1$};
		\draw[xscale=0.25,domain=1.03:20, smooth, variable=\x, blue] plot ({\x}, {1-1.1/\x-1/(\x*\x)});
		\fill[fill=black] (0.43,0) circle (1pt);
		\node[right] (label) at (0.25,-0.4) {$\frac{-k_p+\sqrt{k_p^2+4\omega_0^2}}{2}$};
		\draw[dashed,gray!80] (0,1) -- (5,1);
		\node[gray!80] () at (-0.3,1) {1};
	\end{tikzpicture}
\end{minipage}
\begin{minipage}[c]{0.48\textwidth}
	\begin{tikzpicture}
		\draw[->] (0, 0) -- (5, 0) node[below,xshift=-0.5ex] {$\lambda$};
		\draw[->] (0, -1) -- (0, 1.9) node[left] {$f_2$};
		\draw[xscale=0.1,domain=0:5, smooth, variable=\x, blue] plot ({\x}, {1.4*(1-(\x*0.05)/2+(\x*0.05)*(\x*0.05)/6});
		\draw[xscale=0.1,domain=0:5, smooth, variable=\x, red] plot ({\x}, {1.4*(1-(\x*0.02)/2+(\x*0.02)*(\x*0.02)/6});
		\draw[xscale=0.1,domain=5:50, smooth, variable=\x, blue] plot ({\x}, {1.4*((1-exp(-\x*0.05)))/(\x*0.05)});
		\draw[xscale=0.1,domain=5:50, smooth, variable=\x, red] plot ({\x}, {1.4*((1-exp(-\x*0.02)))/(\x*0.02)});
		\draw[dashed,gray!80] (0,1) -- (5,1);
		\node[gray!80] () at (-0.3,1) {1};
		\fill[fill=black] (0,1.4) circle (1pt);
		\node[] () at (-0.4,1.4) {$-k_d$};
		\node[blue] () at (2,0.55) {\footnotesize$r =0.05$};
		\node[red] () at (2,1.4) {\footnotesize$r =0.02$};
	\end{tikzpicture}
\end{minipage}
\caption{The functions $f_1(\lambda)$ and $f_2(\lambda)$ for $\lambda$ real-valued and positive, $k_d<-1$, $k_p<0$ and $r>0$.}
\label{fig:introduction_example}
\end{figure}

\end{example}
\begin{remark}
	We consider \eqref{eq:characteristic_function} as characteristic function instead of 
	\[
	\det
	\left(
	\lambda
	\begin{bmatrix}
	I_n-BK_d C & 0\\
	0 & I_p 
	\end{bmatrix}-
	\begin{bmatrix}
	A_0+BK_p C & BK_i \\
	C & 0
	\end{bmatrix}
	-\sum_{k=1}^{K}
	\begin{bmatrix}
	A_k & 0 \\
	0 & 0
	\end{bmatrix}e^{-\lambda \tau_k}
	\right),
	\]
	which one would naively obtain by bringing the characteristic equation associated with the closed-loop system to linear form. This is motivated by the fact that this last equation might introduce roots in the origin that have no relation with with the actual asymptotic behavior of the closed-loop system. Furthermore, note that $q$ $\big(=\rank(K_i)\big)$ corresponds to the minimal number of integrators needed to implement the (integral) control action. \hfill $\diamond$
\end{remark}

\section{Losing stability under arbitrarily small implementation errors}
\label{sec:losing_stability}
Motivated by \Cref{ex:2nd_order}, this section will derive conditions on the spectrum of $BK_{d}C$ under which stability is lost under feedback delay (\Cref{subsec:losing_stability_feedback_delay}) and a finite-difference approximation of the derivative (\Cref{subsec:losing_stability_finite_difference}), even when the size of the error is arbitrarily small.

\subsection{Feedback delay}
\label{subsec:losing_stability_feedback_delay}
For studying the effect of delayed feedback, on a stabilizing feedback law of form \eqref{eq:pid_control}, consider now the following system:
\begin{equation}
\label{eq:feedback_delay}
	\dot{x}(t) = A_0\, x(t) + \sum_{k=1}^{K} A_k\, x(t- \tau_k) + B\, u(t-r),
\end{equation}
derived from (\ref{eq:sys}) by including a delay $r>0$ in the input. 

More precisely, the following proposition gives a \emph{sufficient condition\/} on the spectrum of $BK_{d}C$ under which stability is lost under delayed feedback with an arbitrarily small delay.
\begin{proposition}
	\label{prop:delayed_feedback}
	Assume that the gain matrices $K_p$, $K_d$ and $K_i$ are such that the closed-loop system \eqref{eq:sys}-\eqref{eq:pid_control} is exponentially stable. If the spectral radius of $B K_d C$ is larger than one, then the closed-loop of \eqref{eq:feedback_delay} and \eqref{eq:pid_control} is unstable for all $r>0$.
\end{proposition}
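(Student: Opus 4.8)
The plan is to compute the characteristic function of the closed loop of \eqref{eq:feedback_delay} and \eqref{eq:pid_control}, to recognise it as an exponential polynomial of \emph{neutral} type, and to read off from its principal part an infinite chain of characteristic roots that the hypothesis $\rho(BK_dC)>1$ forces into the open right half-plane; instability for every $r>0$ then follows in the spirit of the second half of \Cref{ex:2nd_order}. Concretely, introducing the integrator state $z(t):=V_i\int_0^t C x(s)\,ds$, so that $\dot z(t)=V_iC x(t)$ and $K_i\int_0^t y(s)\,ds=U_iz(t)$, and substituting $u(t-r)=K_pC x(t-r)+K_dC\dot x(t-r)+U_iz(t-r)$ into \eqref{eq:feedback_delay}, one obtains a linear neutral delay-differential system in $(x,z)$ whose characteristic function $H_r(\lambda)$ is obtained from \eqref{eq:characteristic_function} by multiplying every term originating from the input matrix $B$ by $e^{-\lambda r}$. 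In particular the $\lambda$-coefficient $I_n$ appearing in the upper-left block of \eqref{eq:characteristic_function} is replaced by $I_n-BK_dC\,e^{-\lambda r}$, and this is exactly what renders the equation of neutral type.

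The next step is to expand $H_r(\lambda)$ in powers of $\lambda$. On any vertical strip $\gamma_1\le\Re\lambda\le\gamma_2$ every entry of the underlying matrix is affine in $\lambda$ with coefficients that stay bounded (the exponentials $e^{-\lambda r}$ and $e^{-\lambda\tau_k}$ are bounded there), and the $\lambda$-dependence sits only in the two diagonal blocks; hence
\[
H_r(\lambda)=\lambda^{n+q}\,\det\!\big(I_n-BK_dC\,e^{-\lambda r}\big)+O(\lambda^{n+q-1})\qquad\text{as }|\lambda|\to\infty,
\]
uniformly on the strip. The leading coefficient $\det(I_n-BK_dC\,e^{-\lambda r})=\prod_{j=1}^{n}(1-\mu_je^{-\lambda r})$, with $\mu_1,\dots,\mu_n$ the eigenvalues of $BK_dC$, is a non-trivial exponential polynomial (it tends to $1$ as $\Re\lambda\to+\infty$, so $H_r\not\equiv 0$) and is exactly the characteristic function of the difference operator associated with the neutral system. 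Since $\rho(BK_dC)>1$ there is an eigenvalue $\mu$ with $|\mu|>1$, and the factor $1-\mu e^{-\lambda r}$ vanishes precisely at the points $\lambda=\tfrac1r(\ln|\mu|+\mathrm{i}(\arg\mu+2\pi k))$, $k\in\mathbb{Z}$, all of which lie on the vertical line $\Re\lambda=\tfrac1r\ln|\mu|>0$.

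It then remains to transfer this root chain of the principal part to $H_r$ itself. By the asymptotic theory of root chains of neutral type — the same tool used in \Cref{ex:2nd_order}, i.e.\ \cite[Proposition 1.28]{Michiels2014} — the function $H_r$ carries a sequence of roots $\{\lambda_k\}_{k\ge 1}$ with $|\Im\lambda_k|\to+\infty$ and $\Re\lambda_k\to\tfrac1r\ln\rho(BK_dC)>0$. Consequently, for every $r>0$, $H_r$ has infinitely many roots in the open right half-plane, so the closed loop of \eqref{eq:feedback_delay} and \eqref{eq:pid_control} is unstable, as claimed.

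The step I expect to be the genuine obstacle is precisely this last transfer: showing that a zero of the difference symbol $\det(I_n-BK_dC\,e^{-\lambda r})$ persists as a zero of the full function $H_r$ with asymptotically the same real part. The mechanism behind it is a Rouch\'e argument on disks of radius $o(1)$ centred at the points $\lambda_0=\tfrac1r(\ln|\mu|+\mathrm{i}(\arg\mu+2\pi k))$: there one has $1-\mu e^{-\lambda r}\approx r(\lambda-\lambda_0)$, whereas after dividing by $\lambda^{n+q}$ the remainder contributes only $O(1/|\lambda|)$, so the perturbed zero is displaced from $\lambda_0$ by $O(1/|\Im\lambda_0|)\to 0$ (possible coincidences between zeros of different factors $1-\mu_je^{-\lambda r}$ only sharpen the estimate). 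I would carry out this step by invoking \cite[Proposition 1.28]{Michiels2014} rather than reproving it. Finally, note that the hypothesis that the \emph{nominal} closed loop is exponentially stable is not actually used in the argument; it only serves to make the phrase ``stability is lost'' meaningful.
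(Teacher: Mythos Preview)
Your proposal is correct and follows essentially the same route as the paper: write down the closed-loop characteristic function with feedback delay, recognise it as a neutral delay eigenvalue problem, and invoke \cite[Proposition~1.28]{Michiels2014} to obtain the root chain with $\Re\lambda_k\to\tfrac1r\ln\rho(BK_dC)>0$. The paper's proof is terser---it simply displays the characteristic function and immediately appeals to the cited proposition---whereas you additionally spell out the integrator-state derivation, the leading-term expansion, and the Rouch\'e mechanism behind the root-chain result; your closing observation that the nominal-stability hypothesis is not used in the argument is also accurate.
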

\begin{proof}
	Under delayed feedback, the closed-loop system is unstable if 
	\begin{equation}
	\label{eq:characeristic_function_delayed_feedback}
\begin{array}{>{$}p{.9\textwidth}<{$}}
	 H_{1}(\lambda;r) :=\det\Bigg(\lambda \left(\begin{bmatrix}
	I_n & 0\\
	0 & I_q
	\end{bmatrix}- \begin{bmatrix}
	BK_d C  & 0 \\
	0 & 0
	\end{bmatrix}e^{- \lambda r} \right)- \begin{bmatrix}
	A_0 & 0 \\
	V_i C & 0
	\end{bmatrix} \\ \hfill - 
	\begin{bmatrix}
	BK_p C  & B U_i \\
	0 & 0
	\end{bmatrix} e^{- \lambda r}-
	\displaystyle\sum_{k=1}^{K}
	\begin{bmatrix}
	A_k & 0 \\
	0 & 0
	\end{bmatrix}e^{- \lambda \tau_k} \Bigg)
	\end{array}
	\end{equation}
	has at least one root inside the right half-plane. Note that \eqref{eq:characeristic_function_delayed_feedback} corresponds to the characteristic function of a neutral delay eigenvalue problem \cite[Section~1.2]{Michiels2014}. It follows therefore from  \cite[Proposition 1.28]{Michiels2014} that for non-zero $BK_d C$, the characteristic function \eqref{eq:characeristic_function_delayed_feedback} has a sequence of characteristic roots $\{\lambda_k\}_{k=1}^{\infty}$ for which
	\[
	\lim\limits_{k\to \infty} |\Im(\lambda_k)| = +\infty \text{ and } \lim\limits_{k\to \infty} \Re(\lambda_k) = \frac{1}{r}\ln\big(\rho(BK_d C)\big)
	\]
	with $\rho(\cdot)$ the spectral radius of its matrix argument. If $\rho(BK_dC)> 1$, then for all $r>0$, characteristic function $H_1(\cdot;r)$ has thus (infinitely many) characteristic roots in the right half-plane. 
\end{proof}

\subsection{Approximation of derivatives}
\label{subsec:losing_stability_finite_difference}
Next, we derive a similar condition for the effect of using a finite-difference approximation instead of the actual derivative. The feedback signal now becomes:
\begin{equation}
	\label{eq:pid_finite_difference}
	u(t) = K_p\, y(t) + K_d\, \frac{y(t)-y(t-r)}{r} + K_i \int_0^{t} y(s)\, ds.
\end{equation} 
\begin{proposition}
	\label{prop:finite_difference}
	Assume that the gain matrices $K_p$, $K_d$ and $K_i$ are such that the closed-loop system \eqref{eq:sys}-\eqref{eq:pid_control} is exponentially stable. If at least one of the eigenvalues of $B K_d C$ lies outside $\mathrm{clos}(S)$, with 
	\begin{equation*}\label{defS}
	S:=
	\left\{\begin{array}{l}
	\lambda\in\mathbb{C}:\ \Im(\lambda)\in (-\pi,\ \pi)\mathrm{\ and\ } \\
	\hspace*{1.6cm}\Re(\lambda) <\left\{\begin{array}{ll}
	\Im(\lambda)\ \mathrm{cotan}(\Im(\lambda)), & \Im(\lambda)\in(-\pi,\ 0)\cup(0,\ \pi),\\
	1, & \Im(\lambda)=0,
	\end{array}\right.
	\end{array}\right\}
	\end{equation*}
	then there exists a $\hat{r}>0$ such that the closed-loop of \eqref{eq:sys} and \eqref{eq:pid_finite_difference} is unstable for all $r\in(0,\hat{r})$
\end{proposition}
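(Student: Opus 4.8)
The plan is to follow the strategy of \Cref{prop:delayed_feedback}, except that the closed loop of \eqref{eq:sys}--\eqref{eq:pid_finite_difference} is of \emph{retarded} type (the only term involving $\dot x(t)$ is undelayed), so for each fixed $r>0$ it has only finitely many roots in any right half-plane; instability for small $r$ must therefore be produced by a characteristic root whose real part escapes to $+\infty$ as $r\to 0^{+}$. Substituting \eqref{eq:pid_finite_difference} into \eqref{eq:sys} and introducing the integral states exactly as in \eqref{eq:characteristic_function} (with $K_i=U_iV_i$, $q=\rank(K_i)$), the closed-loop characteristic function becomes
\[
H_{2}(\lambda;r):=\det\begin{bmatrix}\lambda I_n-A_0-BK_pC-\dfrac{1-e^{-\lambda r}}{r}\,BK_dC-\displaystyle\sum_{k=1}^{K}A_k e^{-\lambda\tau_k} & -BU_i\\[1.2em] -V_iC & \lambda I_q\end{bmatrix},
\]
which reduces to $H_0$ of \eqref{eq:characteristic_function} as $r\to0^{+}$ (uniformly on compacta, since $\tfrac{1-e^{-\lambda r}}{r}\to\lambda$). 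To capture the escaping roots, substitute $\lambda=z/r$ and scale the first $n$ and last $q$ rows by $r$, giving the entire function $\widetilde H(z;r):=r^{\,n+q}H_{2}(z/r;r)$. On $\{\Re z>0\}$ one has $e^{-z\tau_k/r}\to0$ and $r\to0$, so $\widetilde H(z;r)\to\widetilde H_0(z):=z^{q}\det\!\big(zI_n-(1-e^{-z})BK_dC\big)$ locally uniformly there; denoting the eigenvalues of $BK_dC$ by $\alpha_1,\dots,\alpha_n$, the nonzero right-half-plane roots of $\widetilde H_0$ are exactly the solutions of $g(z)=\alpha_j$ for some $j$, where $g(z):=z/(1-e^{-z})$ (with $g(0):=1$).

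The heart of the proof is the geometric claim
\[
\mathbb{C}\setminus\mathrm{clos}(S)\ \subseteq\ g\big(\{z:\Re z>0\}\big).
\]
A direct computation gives $g(i\omega)=\tfrac{\omega}{2\sin(\omega/2)}\,e^{i\omega/2}$, so $\Re\,g(i\omega)=\tfrac{\omega}{2}\,\mathrm{cotan}(\tfrac{\omega}{2})$ and $\Im\,g(i\omega)=\tfrac{\omega}{2}$; hence as $\omega$ runs over $(-2\pi,2\pi)\setminus\{0\}$ the point $g(i\omega)$ traces precisely the boundary curve of $S$, with $g(0)=1$ the limiting boundary point at $\Im\lambda=0$. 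Since $g$ is holomorphic and nonconstant on $\{\Re z>0\}$, the image $g(\{\Re z>0\})$ is open and its boundary is contained in $g(i\mathbb{R})$; moreover, for real $\alpha>1$ the function $F(z)=z-\alpha(1-e^{-z})$ satisfies $F(0)=0$, $F'(0)=1-\alpha<0$ and $F(z)\to+\infty$, so it has a positive real root $z_{*}$, whence $\alpha=g(z_{*})\in g(\{\Re z>0\})$. Combining these observations with a continuation argument — $\mathbb{C}\setminus\mathrm{clos}(S)$ is connected, and a root of $z=\alpha(1-e^{-z})$ can enter or leave $\{\Re z>0\}$ only when $\alpha$ crosses $g(i\mathbb{R})$ — one upgrades the single point $\alpha>1$ to the whole set $\mathbb{C}\setminus\mathrm{clos}(S)$, establishing the claim. (Equivalently, the preimages are given explicitly by the Lambert-$W$ branches, $z=\alpha+W_k(-\alpha e^{-\alpha})$, which can be used to make the count of right-half-plane roots precise.)

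To finish, pick an eigenvalue $\alpha_j$ of $BK_dC$ with $\alpha_j\notin\mathrm{clos}(S)$ and, by the claim, a $z_{*}$ with $\Re z_{*}>0$ and $g(z_{*})=\alpha_j$, i.e.\ $\widetilde H_0(z_{*})=0$. Choose a small closed disk $D$ around $z_{*}$ with $D\subset\{\Re z>0\}$ and $\widetilde H_0\neq0$ on $\partial D$. By the locally uniform convergence $\widetilde H(\cdot;r)\to\widetilde H_0$ and Rouch\'e's (equivalently Hurwitz's) theorem, there is $\hat r>0$ such that for every $r\in(0,\hat r)$ the function $\widetilde H(\cdot;r)$ has a zero $z(r)\in D$; then $\lambda(r):=z(r)/r$ is a characteristic root of the closed loop with $\Re\lambda(r)=\Re z(r)/r\geq \tfrac{1}{r}\min_{D}\Re z>0$, so the closed loop of \eqref{eq:sys} and \eqref{eq:pid_finite_difference} has an unstable mode for all $r\in(0,\hat r)$.

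I expect the geometric claim of the second paragraph to be the main obstacle: one must show that \emph{all} of $\mathbb{C}\setminus\mathrm{clos}(S)$ — including the unbounded tails with $|\Im\lambda|\ge\pi$, which the extra pieces $g(i\omega)$ with $|\omega|>2\pi$ subdivide further — is covered by $g(\{\Re z>0\})$, and not merely the subdivision cell adjacent to the positive real axis. This can be done by a careful argument-principle/winding count for $z\mapsto z-\alpha(1-e^{-z})$ along paths in $\mathbb{C}\setminus\mathrm{clos}(S)$, or via the explicit Lambert-$W$ parametrization, or it may be imported from the analogous analysis in \cite{Michiels2009}.
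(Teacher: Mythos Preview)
Your proposal is correct and follows essentially the same approach as the paper: rescale via $\lambda=z/r$, multiply by $r^{n+q}$, pass to the limit $\widetilde H_0(z)=z^{q}\det\!\big(zI_n-(1-e^{-z})BK_dC\big)$ uniformly on compacta in the open right half-plane, and then use Rouch\'e/Hurwitz to pull back a right-half-plane root of $\widetilde H_0$ to one of $H_2(\cdot;r)$ for all small $r$. The only difference is that the paper does not argue the geometric claim $\mathbb{C}\setminus\mathrm{clos}(S)\subseteq g(\{\Re z>0\})$ itself but imports it wholesale from \cite[Lemma~A.1]{Michiels2009} (and likewise refers to \cite[Proposition~3.1]{Michiels2009} for the Rouch\'e step), whereas you sketch the boundary parametrization $g(i\omega)$ and a continuation argument; your final sentence already anticipates this, and your computations of $g(i\omega)$ and the boundary of $S$ are correct.
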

\begin{proof}
	The stability of the closed-loop of \eqref{eq:sys} and \eqref{eq:pid_finite_difference} is characterized by the roots of 
	\begin{equation}
	\label{eq:characteristic_function_finite_difference}
	\begin{array}{>{$}p{.9\textwidth}<{$}}
	H_2(\lambda;r) := \det\Bigg(\lambda\left(
	\begin{bmatrix}
	I_n & 0 \\
	0 & I_q
	\end{bmatrix} -\begin{bmatrix}
	 BK_d C & 0 \\
	 0 & 0
	\end{bmatrix}\frac{1-e^{-\lambda r}}{\lambda r}\right) \\ \hfill - \begin{bmatrix}
	A + BK_p C  & BU_i \\
	V_i C & 0
	\end{bmatrix}- \displaystyle\sum_{k=1}^{K}
	\begin{bmatrix}
	A_k & 0 \\
	0 & 0
	\end{bmatrix} e^{-\lambda \tau_k}
	\Bigg).
	\end{array}
	\end{equation}
	Multiplying $H_2(\cdot;r)$ with $r^{n+q}$ and introducing the change of variables $\widehat{\lambda} = \lambda r$, we obtain
	\[
	\begin{array}{>{$}p{.9\textwidth}<{$}}
	G(\widehat{\lambda};r) := \det\Bigg(\widehat{\lambda}
	\begin{bmatrix}
	I_n & 0 \\
	0 & I_q
	\end{bmatrix} -\begin{bmatrix}
	BK_d C & 0 \\
	0 & 0
	\end{bmatrix}\big(1-e^{-\widehat{\lambda}}\big) \\ \hfill - r \begin{bmatrix}
	A + BK_p C  & BU_i \\
	V_i C & 0
	\end{bmatrix} - \displaystyle\sum_{k=1}^{K} \begin{bmatrix}
	A_k & 0 \\
	0 & 0
	\end{bmatrix} r e^{-\widehat{\lambda} (\tau_k/r)}
	\Bigg).
	\end{array}
	\]
	As $r\searrow 0$, $G(\cdot;r)$ uniformly converges to
	\[
	\ \widetilde{G}(\widehat{\lambda}) := \det\left(
	\widehat{\lambda} \begin{bmatrix}
	I_n & 0 \\
	0 & I_q
	\end{bmatrix}-
	\begin{bmatrix}
	BK_d C & 0 \\
	0 & 0
	\end{bmatrix} 
	\big(1- e^{-\widehat{\lambda}}\big)
	\right)
	\]
	on compact regions in the open right half-plane. This function $\widetilde{G}(\cdot)$ has a root of multiplicity (at least) $r$ at the origin; the other roots of $\widetilde{G}(\cdot)$ are the solutions of the following $n$ equations
	\[
	\widehat{\lambda} - \lambda_i \left(1-e^{-\widehat{\lambda}}\right) = 0\quad \text{for }i=1,\dots,n,
	\]
	in which $\{\lambda_i\}_{i=1}^{n}$ are the eigenvalues of $BK_d C$. It follows from \cite[Lemma A.1]{Michiels2009} that $\widetilde{G}(\cdot)$ has a root in the open right half-plane if (and only if) $BK_d C$ has an eigenvalue outside $\mathrm{clos}(S)$. As $G(\cdot;r)$ uniformly converges to $\widetilde{G}(\cdot)$ on compact regions in the open right half-plane, it follows from a similar argument as in \cite[Proposition~3.1 - Case~1]{Michiels2009} that there exist $c>0$ and $\hat{r}>0$ such that the function $G(\cdot;r)$ has at least one root in the right half-plane $\{\lambda \in \C:\Re(\lambda)> c \}$ for all $r \in (0,\hat{r})$. Because the roots of $H_2(\cdot;r)$ correspond to the roots of $G(\cdot;r)$ divided by $r$, $H_2(\cdot;r)$ has at least one root in the right half-plane $\{\lambda \in \C:\Re(\lambda)> c/r \}$. 
\end{proof}

\section{Strong stability}
\label{sec:strong_stability}
The previous section demonstrated that, under certain conditions on the spectrum of $BK_d C$, there exist arbitrarily small implementation errors that render a stable closed-loop system unstable. Or in other words, under certain conditions on the derivative gain matrix, the stability of the closed-loop system is \emph{sensitive} with respect to infinitesimal perturbations. As observed in the proofs of the previous section, these perturbations introduced characteristic roots in the right half-plane that move to $+\infty$ as the perturbation size goes to zero. An intuitive way to avoid such right half-plane roots is to apply a low-pass filter to the derivative signal, leading to the following control law:
\begin{equation}
\label{eq:pid_lowpass_filter}
u(t) = K_p y(t) + K_d\zeta(t) + K_i \int_{0}^{t} y(s) ds 
\end{equation}
with
\[
T \dot{\zeta}(t) + \zeta(t) =  \dot{y}(t),
\]
and $1/T$ the cut-off frequency. However, as observed in \cite[Theorem~3]{Georgiou1989} and \cite[Secion~4.1]{Michiels2009} this low-pass filter might itself be destabilizing. 

The following subsection shows that this low-pass filter does not destroy stability for sufficiently large cut-off frequencies if a certain algebraic constraint on $K_d$ is fulfilled. Subsequently, \Cref{subsec:strong_stability} introduces the notion of strong stability, which requires the closed-loop to remain stable under sufficiently small perturbations. In that subsection we will also show that if the aforementioned constraint on $K_d$ is fulfilled, then the closed-loop with low-pass filtering is strongly stable. Finally, these theoretical results and those of the previous section are illustrated using a third-order system in \Cref{subsec:illustration_third_order}.
\subsection{Losing stability under low-pass filtering}
\label{subsec:low_pass_filter}
The following proposition shows that, under a particular condition on the spectrum of $B K_d C$, the stability of the closed-loop system \eqref{eq:sys}-\eqref{eq:pid_control} is sensitive with respect to low pass filtering of the derivative signal. However, this proposition also shows that if the real parts of the eigenvalues of $B K_d C$ are smaller than 1, then stability of the closed-loop system is preserved when replacing control law \eqref{eq:pid_control} by control law \eqref{eq:pid_lowpass_filter} with $T$ sufficiently small.
\begin{proposition}
	\label{prop:low_pass}
	Assume that the gain matrices $K_p$, $K_d$ and $K_i$ are such that the closed-loop system \eqref{eq:sys}-\eqref{eq:pid_control} is exponentially stable. If $BK_dC$ has an eigenvalue with real part larger than one, then there exists a $\widehat{T}>0$ such that the closed loop \eqref{eq:sys}-\eqref{eq:pid_lowpass_filter} is unstable for all $T \in (0,\widehat{T})$. On the other hand, if $BK_dC -I_n$ is Hurwitz then there exists a $\widehat{T}>0$ such that the closed loop \eqref{eq:sys}-\eqref{eq:pid_lowpass_filter} is exponentially stable for all $T \in (0,\widehat{T})$.
\end{proposition}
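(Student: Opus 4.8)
The plan is to derive the characteristic function $H_3(\lambda;T)$ of the closed loop \eqref{eq:sys}--\eqref{eq:pid_lowpass_filter}, to factor out the $p$ ``trivial'' fast eigenvalues the filter introduces, and to analyse the remaining factor by the two-time-scale reasoning of \Cref{prop:finite_difference} --- only simpler here, since the filter dynamics are first order and non-delayed. Using $K_i=U_iV_i$ and the substitution $\eta=T\zeta-Cx$ (which removes $\dot y$ from $T\dot\zeta+\zeta=\dot y$), the closed loop has a linear realisation with state $(x,\eta,w)\in\mathbb{R}^n\times\mathbb{R}^p\times\mathbb{R}^q$ and characteristic function
\[
H_3(\lambda;T):=\det\begin{bmatrix}\Delta_0(\lambda)-\tfrac1T BK_dC & -\tfrac1T BK_d & -BU_i\\ \tfrac1T C & (\lambda+\tfrac1T)I_p & 0\\ -V_iC & 0 & \lambda I_q\end{bmatrix},\quad \Delta_0(\lambda):=\lambda I_n-A_0-BK_pC-\textstyle\sum_{k}A_k e^{-\lambda\tau_k}.
\]
Eliminating the middle block row and column (valid for $\lambda\ne-1/T$) yields $H_3(\lambda;T)=(\lambda+\tfrac1T)^{p}\,\widetilde H(\lambda;T)$ with
\[
\widetilde H(\lambda;T):=\det\begin{bmatrix}\Delta_0(\lambda)-\tfrac{\lambda}{T\lambda+1}BK_dC & -BU_i\\ -V_iC & \lambda I_q\end{bmatrix},
\]
which is analytic on $\{\Re\lambda\ge-\delta\}$ once $T<1/\delta$ and satisfies $\widetilde H(\,\cdot\,;0)=H_0$. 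Since the factor $(\lambda+\tfrac1T)^{p}$ contributes only the root $\lambda=-1/T$, the problem reduces to locating the roots of $\widetilde H(\,\cdot\,;T)$ in $\{\Re\lambda\ge-\delta\}$.

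For the instability claim I would pass to the fast scale $\widehat\lambda=\lambda T$: multiplying $\widetilde H(\widehat\lambda/T;T)$ by $T^{\,n+q}$ and letting $T\searrow0$, the terms $T A_k e^{-\widehat\lambda\tau_k/T}$, $T(A_0+BK_pC)$, $TBU_i$ and $TV_iC$ vanish uniformly on compact subsets of the open right half-plane, and the rescaled function converges to
\[
\Phi_0(\widehat\lambda)=\frac{\widehat\lambda^{\,n+q}}{(\widehat\lambda+1)^{n}}\prod_{i=1}^{n}\bigl(\widehat\lambda+1-\lambda_i\bigr),
\]
$\lambda_1,\dots,\lambda_n$ being the eigenvalues of $BK_dC$. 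If $\Re\lambda_{i_0}>1$ for some $i_0$, then $\widehat\lambda=\lambda_{i_0}-1$ is a zero of $\Phi_0$ in the open right half-plane, and a Rouché argument on a small circle around it, exactly as in \cite[Proposition~3.1]{Michiels2009}, produces for all sufficiently small $T$ a root of $\widetilde H(\,\cdot\,;T)$ near $(\lambda_{i_0}-1)/T$, hence a root of $H_3(\,\cdot\,;T)$ with positive real part (which moreover escapes to $+\infty$ as $T\to0$).

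For the stability claim, set $-\delta_0:=\sup\{\Re\lambda:H_0(\lambda)=0\}<0$ (finite and negative by the standing assumption) and fix $\delta\in(0,\delta_0)$; I would show $\widetilde H(\,\cdot\,;T)$ has no root with $\Re\lambda\ge-\delta$ for $T$ small by splitting that half-plane into $|\lambda|\le M$ and $|\lambda|\ge M$. On $\{\Re\lambda\ge-\delta,\ |\lambda|\le M\}$ one has $\tfrac{\lambda}{T\lambda+1}\to\lambda$ uniformly as $T\searrow0$, so $\widetilde H(\,\cdot\,;T)\to H_0$ uniformly there; since $H_0$ is zero-free on this compact set, so is $\widetilde H(\,\cdot\,;T)$ for $T$ small. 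On $\{\Re\lambda\ge-\delta,\ |\lambda|\ge M\}$ I would take the Schur complement with respect to $\lambda I_q$ to rewrite $\widetilde H(\lambda;T)=0$ as $\det\bigl(I_n-\tfrac1{T\lambda+1}BK_dC-E(\lambda;T)\bigr)=0$ with $E(\lambda;T)=\tfrac1\lambda\bigl(A_0+BK_pC+\sum_kA_ke^{-\lambda\tau_k}\bigr)+\tfrac1{\lambda^2}BK_iC$ satisfying $\|E(\lambda;T)\|\le C_1/|\lambda|$ uniformly in $T$; moreover $z:=\tfrac1{T\lambda+1}$ lies in the open unit disk, since $\Re(T\lambda+1)\ge1-\widehat T\delta>0$. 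Letting $M$ grow forces $z$ near a zero of $z\mapsto\det(I_n-zBK_dC)=\prod_i(1-z\lambda_i)$, i.e.~near some $1/\lambda_i$ with $\lambda_i\ne0$ (and if there is no such zero, there is no large-modulus root at all); because $\Re\lambda_i<1$ --- this is precisely ``$BK_dC-I_n$ Hurwitz'' --- it follows that $\Re\lambda=\tfrac1T\bigl(\Re\tfrac1z-1\bigr)$ is negative, in fact $\le\tfrac1{2T}\max_i(\Re\lambda_i-1)\to-\infty$, contradicting $\Re\lambda\ge-\delta$ once $M$ and $1/T$ are large enough. Hence for all small $T$ neither $\widetilde H(\,\cdot\,;T)$ nor $H_3(\,\cdot\,;T)$ has roots with $\Re\lambda\ge-\delta$, so the closed loop is exponentially stable.

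The hard part will be making the stability argument uniform in $T$: one must rule out roots in the intermediate band $1\ll|\lambda|\ll1/T$, where $\widetilde H(\,\cdot\,;T)$ is close neither to $H_0$ nor to its fast-scale limit $\Phi_0$. The Schur-complement reduction is what tames this band --- it isolates the factor $I_n-\tfrac1{T\lambda+1}BK_dC$, whose near-singularity pins $\lambda$ onto the curve $T\lambda+1\approx\lambda_i$, which sits in the open left half-plane exactly under the hypothesis of the second claim --- and this step parallels \cite[Proposition~3.1~--~Case~2]{Michiels2009}. A minor but necessary bookkeeping point is that $\widetilde H$ has a pole at $\lambda=-1/T$; this is harmless because $-1/T<-\delta$ for $T$ small, but it is the reason the whole argument is phrased on $\{\Re\lambda\ge-\delta\}$ rather than on the closed right half-plane.
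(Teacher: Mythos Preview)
Your instability argument coincides with the paper's (rescale by $T$ and apply Rouch\'e to the limit $\Phi_0$, exactly as in the proof of \Cref{prop:finite_difference}). For the stability claim the paper is more direct than your compact/non-compact split: it observes that the Hurwitz hypothesis on $BK_dC-I_n$ makes $I_n-\tfrac{1}{T\lambda+1}BK_dC$ invertible with \emph{uniformly} bounded inverse over the whole half-plane $V=\{\Re\lambda>-\epsilon\}$ and over all $T\in(0,\widehat T)$ at once (because $w=T\lambda+1$ then ranges over $\{\Re w>1-\widehat T\epsilon\}$, which misses every eigenvalue of $BK_dC$ once $\widehat T\epsilon$ is small). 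Rewriting the characteristic equation through this inverse yields a finite a-priori bound $|\lambda_0|\le\Xi$ on any root in $V$, after which a single Rouch\'e argument on the compact set $V\cap\{|\lambda|\le\Xi\}$ finishes. In effect the uniform invertibility absorbs your large-$|\lambda|$ Schur-complement analysis into the bound $\Xi$, so the ``intermediate band $1\ll|\lambda|\ll 1/T$'' you flag as the hard part never materialises. One correction to your write-up: $\Re(T\lambda+1)\ge 1-\widehat T\delta>0$ does \emph{not} place $z=1/(T\lambda+1)$ in the open unit disk, only in the disk $|z|\le 1/(1-\widehat T\delta)$; this weaker bound is what your argument actually needs (it confines $z$ to a compact set on which $(I_n-zBK_dC)^{-1}$ is bounded away from its poles $1/\lambda_i$), and your step ``letting $M$ grow forces $z$ near some $1/\lambda_i$'' silently relies on it.
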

\begin{proof}
	The closed-loop stability of system \eqref{eq:sys}-\eqref{eq:pid_lowpass_filter} is characterized by the roots of
	\begin{equation}
	\label{eq:characteristic_function_lowpass_filter}
	\begin{array}{>{$}p{.9\textwidth}<{$}}
	H_3(\lambda;T) := \det
	\Bigg(
	\lambda
	\left(
	\begin{bmatrix}
	I_n & 0\\
	0 & I_q 
	\end{bmatrix}- \frac{1}{\lambda T+1} \begin{bmatrix}
	BK_d C & 0 \\
	0 & 0
	\end{bmatrix}\right) \\[10pt] \hfill -
	\begin{bmatrix}
	A_0+BK_p C & BU_i \\
	V_i C & 0
	\end{bmatrix}-
	\displaystyle\sum_{k=1}^{K}
	\begin{bmatrix}
	A_k & 0 \\
	0 & 0
	\end{bmatrix}e^{-\lambda \tau_k}
	\Bigg).
	\end{array}
	\end{equation}

	If $BK_d C$ has an eigenvalue with real part larger than one, then one can use a similar derivation as in the proof of \Cref{prop:finite_difference} to show that there exist $c>0$ and $\widehat{T}>0$ such that $H_3(\cdot;T)$ has a root in the right half-plane $\{\lambda \in \C:\Re(\lambda)> c/T \}$ for all $T\in(0,\widehat{T})$.

	On the other hand, if $BK_d C-I_n$ is Hurwitz, then for each $\epsilon>0$ there exists a $\widehat{T}>0$ such that $
	I_n- (BK_{d} C)/(\lambda T + 1)
	$ is invertible for all $\lambda$ that lie inside the right half-plane \linebreak $V:= \left\{\lambda\in\C:\Re(\lambda) > -\epsilon \right\}$ and all $T\in(0,\widehat{T})$. This means that inside the right half-plane $V$, $H_3(\lambda;T)=0$ is equivalent with
	\[
	\det\Bigg( \lambda\begin{bmatrix}
	I_n & 0\\
	0 & I_q 
	\end{bmatrix}-
	\begin{bmatrix}
	\left(I_n- \frac{BK_d C}{\lambda T+1}\right)^{-1} & 0\\
	0 & I_q
	\end{bmatrix}  \left(
	\begin{bmatrix}
	A_0+BK_p C & BU_i \\
	V_i C & 0
	\end{bmatrix}+
	\displaystyle\sum_{k=1}^{K}
	\begin{bmatrix}
	A_k & 0 \\
	0 & 0
	\end{bmatrix}e^{-\lambda \tau_k}\right) \Bigg) = 0.
	\]
	A characteristic root, $\lambda_0$, of $H_3(\cdot;T)$ inside $V$ is thus bounded in modulus by
	\[
	|\lambda_0| \leq \sup_{\lambda \in V } \left\|\begin{bmatrix}
	\left(I_n- \frac{BK_d C}{\lambda T+1}\right)^{-1} & 0\\
	0 & I_q 
	\end{bmatrix}\right\| \left(
	\left\|	\begin{bmatrix}
	A_0+BK_p C & BU_i \\
	V_i C & 0
	\end{bmatrix}\right\| + \sum_{k=1}^{K} \|A_k\| e^{-\Re(\lambda) \tau_k} 
	\right) := \Xi < \infty
	\]
	Because $H_3(\cdot;T)$ uniformly converges to $H_0(\cdot)$ on compact regions in the complex plane as $T\searrow0$, there exists a $\widetilde{T}\leq \widehat{T}$ such that for all $T\in(0,\widetilde{T})$
	\[
	\max_{\lambda\in\partial\Omega} |H_3(\lambda;T)-H_0(\lambda)| \leq \min_{\lambda \in \partial\Omega} |H_0(\lambda)|
	\]
	with $\Omega$ the intersection of $V$ and $\{\lambda\in\C:|\lambda|\leq \Xi \}$. It now follows from Rouch\'e's theorem \cite{Titchmarsh1939} that 
	$H_3(\cdot;T)$ and $H_0(\cdot)$ have the same number of zeros in $\Omega$, and hence also in $V$, for all $T\in (0,\widetilde{T})$. The proposition now follows by choosing $-\epsilon$ larger than the real part of the right-most root of $H_0(\cdot)$.
\end{proof}

\subsection{Strongly stable closed-loop}
\label{subsec:strong_stability}
	In this subsection, we will show that if the low-pass filter itself is not destabilizing, then the closed-loop system \eqref{eq:sys}-\eqref{eq:pid_lowpass_filter} does not suffer from the same sensitivity problems as the ones encountered in \Cref{sec:losing_stability}. Furthermore, instead of restricting ourselves to delayed feedback and a finite-difference approximation of the derivative, we will consider all perturbations that fit the framework given in \Cref{fig:strong_stability_framework}, in which perturbation functions $R_1(\cdot;\cdot) : \C \times  [0,+\infty) \mapsto \C^{m\times m}$, $R_2(\cdot;\cdot) : \C \times  [0,+\infty) \mapsto \C^{p \times p}$ and $R_3(\cdot;\cdot) : \C \times  [0,+\infty) \mapsto \C^{p\times p}$ fulfill the following assumptions:
	\begin{assumption}~
	\label{ass:perturbations}
	\begin{enumerate}
		\item for every $r \geq0$, the functions $\left\{\lambda\mapsto R_{i}(\lambda;r)\right\}_{i=1}^3$ are meromorphic and for every $\lambda \in \C$, the functions $\left\{r \mapsto R_{i}(\lambda;r)\right\}_{i=1}^3$ are continuous;
		\item the matrices $R_{1}(\lambda;r)$ and $R_{3}(\lambda;r)$ are of full rank for all $\lambda\in \mathbb{C}$ and all $r \geq 0$;
		\item $\lambda \mapsto R_{i}(\lambda;0) \equiv I$ for  $i=1,2,3$;
		\item for every compact set $\Omega \subset \C$, we have
		\[
		\lim\limits_{r\rightarrow 0+} \max_{\lambda \in \Omega} \|R_{i}(\lambda;r) - I\| = 0 \text{ for }i=1,2,3
		\]\ie{}the functions $\left\{\lambda\mapsto R_{i}(\lambda;r)\right\}_{i=1}^{3}$ uniformly converge to the identity matrix on compact regions in the complex plane as $r$ goes to zero;
		\item there exist constants $M,N,\hat{r} > 0$ such that for all $\lambda \in \C$ with $\Re(\lambda) \geq -N$ and for all $r \in (0,\hat{r})$
		\[
		\|R_{i}(\lambda;r)\| \leq M \text{ for } i=1,2,3.
		\]
	\end{enumerate}
	\end{assumption}
    The implementation errors studied in \Cref{sec:losing_stability} fit this framework:
	\begin{itemize}
		\item $R_{1}(\lambda;r) = e^{-\lambda r} I_m$ and $R_{2}(\lambda;r) = R_{3}(\lambda;r) = I_p$ for \eqref{eq:feedback_delay};
		\item $R_1(\lambda;r) = I_m$, $R_2(\lambda,r)=\begin{cases}
		\frac{1-e^{-\lambda r}}{\lambda r}I_p & \lambda r \neq 0 \\
		I_p & \lambda r = 0
		\end{cases}$ and $R_3(\lambda,r)=I_p$ for \eqref{eq:characteristic_function_finite_difference};
	\end{itemize}
	 We note that also the low-pass filter in \eqref{eq:pid_lowpass_filter} can be interpreted in terms of this perturbation framework: $R_1(\lambda;r) = I_m$, $R_2(\lambda,r)=I_p/\left(\lambda r + 1\right)$ and $R_3(\lambda,r)=I_{p}$.
	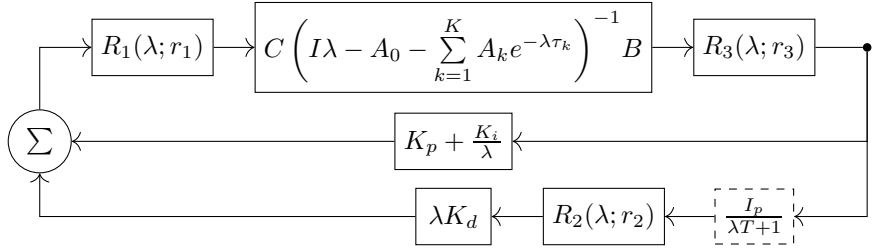
\begin{figure}[!htbp]
		\centering
		\begin{tikzpicture}
			\node[draw,minimum width=1.5cm,minimum height=1cm] (plant) at (0,0)  {$C\left(I\lambda-A_0-\sum\limits_{k=1}^{K}A_k e^{-\lambda \tau_k}\right)^{-1}\!B$};
			\node[draw,circle] (sum) at (-5.5,-1.25) {$\sum$};
			\node[draw,circle,fill=black,inner sep=1pt,minimum size=1pt] (node) at (5.5,0) {};
			\node[draw,minimum width=1cm,minimum height=0.75cm](Kp) at (0,-1.25) {$K_p+\frac{K_i}{\lambda}$};
			\node[draw,minimum width=1cm,minimum height=0.75cm](Rout) at (4,0) {$R_{3}(\lambda;r_3)$};
			\node[draw,minimum width=1cm,minimum height=0.75cm](Rin) at (-4,0) {$R_{1}(\lambda;r_1)$};
			\node[draw,minimum width=1cm,minimum height=0.75cm](Kd) at (0,-2.25) {$\lambda K_d$};
			\node[draw,minimum width=1cm,minimum height=0.75cm](Ro2) at (2,-2.25) {$R_2(\lambda;r_2)$};
			\node[draw,dashed] (filter) at(4,-2.25) {{$\frac{I_p}{\lambda T + 1}$}};
						
			\draw[-{>[scale=1.4]}] (sum) |- (Rin.west);
			\draw[-{>[scale=1.4]}] (Rin) -- (plant);
			\draw[-{>[scale=1.4]}] (plant)--(Rout);
			\draw[-{>[scale=1.4]}] (node)|-(Kp);
			\draw[-{>[scale=1.4]}] (node)|-(filter);
			\draw[-] (Rout) -- (node);
			\draw[-{>[scale=1.4]}]  (Ro2.west) -- (Kd);
			\draw[-{>[scale=1.4]}] (filter) -- (Ro2.east);
			\draw[-{>[scale=1.4]}] (Kp.west) -- (sum);
			\draw[-{>[scale=1.4]}] (Kd) -| (sum); 
		\end{tikzpicture}
		\caption{Closed-loop description of the considered perturbation framework.}
		\label{fig:strong_stability_framework}
	\end{figure}
	
Next, in analogy with neutral functional differential equations, in which stability can be sensitive to arbitrarily small perturbations on the delays, we define strong stability as follows.
\begin{definition}
	A closed-loop system is \emph{strongly stable\/} if for every set $\{R_{i}(\lambda;r_{i})\}_{i=1}^3$ that satisfies the five assumptions given above, there exists a $\hat{r} > 0$ such that the corresponding perturbed closed-loop system is exponentially stable for all $r_{1}$, $r_{2}$ and $r_{3}$ in the open interval $(0,\hat{r})$.
\end{definition}
\begin{remark}
	The notion of strong stability used here should not be confused with the one used in \cite[Section 5.3]{Vidyasagar2011}, where strong stability refers to stabilization with controllers that themselves are stable.
\end{remark}
In the following proposition, it is shown that a stable closed-loop system of the form \eqref{eq:sys}-\eqref{eq:pid_control} can be made strongly stable by including a low-pass filter with a sufficiently large cut-off frequency if the low-pass filter itself is not destabilizing. 

\begin{proposition}
	\label{prop:strong_stability}
	Assume that the gain matrices $K_p$, $K_d$ and $K_i$ are such that the closed-loop system \eqref{eq:sys}-\eqref{eq:pid_control} is exponentially stable. If $B K_d C - I_n$ is Hurwitz, then the closed-loop of \eqref{eq:sys} and \eqref{eq:pid_lowpass_filter} is strongly stable for sufficiently small~$T$.
\end{proposition}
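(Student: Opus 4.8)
The plan is to follow the two–stage structure of the proof of \Cref{prop:low_pass}. Since $BK_dC-I_n$ is Hurwitz, \Cref{prop:low_pass} provides a $\widehat T>0$ such that for every $T\in(0,\widehat T)$ the unperturbed low–pass closed loop \eqref{eq:sys}-\eqref{eq:pid_lowpass_filter} is exponentially stable; the first step is to fix one such $T$ — since the low–pass filter is henceforth part of the nominal controller, $T$ may be chosen before any perturbation is considered. Write $H_3(\cdot;T)$ for its characteristic function \eqref{eq:characteristic_function_lowpass_filter}; exponential stability furnishes a $\gamma>0$ with $H_3(\lambda;T)\ne 0$ whenever $\Re(\lambda)\ge-\gamma$. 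Now fix an arbitrary admissible perturbation triple $\{R_i(\lambda;r_i)\}_{i=1}^{3}$ and let $M,N,\hat r>0$ be the constants from item~5 of \Cref{ass:perturbations}; pick $\epsilon\in\big(0,\min\{\gamma,N,1/T\}\big)$ so that, on the closed half–plane $\overline V:=\{\lambda\in\C:\Re(\lambda)\ge-\epsilon\}$, the function $H_3(\cdot;T)$ is holomorphic and zero–free and, for every $r\in(0,\hat r)$, each $R_i(\cdot;r)$ is holomorphic with $\|R_i(\cdot;r)\|\le M$.

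The second step is to write the characteristic function $H(\lambda;r_1,r_2,r_3)$ of the perturbed closed loop of \Cref{fig:strong_stability_framework} (for this fixed $T$), obtained exactly as $H_0$ and $H_3$ but with the meromorphic factors $R_1,R_2,R_3$ inserted on the input, derivative and output channels. Since $R_i(\cdot;0)\equiv I$ (item~3), $H(\cdot;0,0,0)=H_3(\cdot;T)$, and by items~1 and~4 together with continuity of the determinant the map $(r_1,r_2,r_3)\mapsto H(\cdot;r_1,r_2,r_3)$ is continuous and converges to $H_3(\cdot;T)$ uniformly on compact subsets of $\C$ as $(r_1,r_2,r_3)\to 0$.

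The core of the argument, and the step I expect to be the main obstacle, is an a priori bound that is \emph{uniform in all three perturbation parameters}: there should exist a $\Xi<\infty$, depending on $T$, on $M$ and on the norms of $A_0,\dots,A_K,B,C,K_p,K_d,K_i$, but \emph{not} on $r_1,r_2,r_3\in(0,\hat r)$, such that $H(\lambda;r_1,r_2,r_3)\ne 0$ for all $\lambda\in\overline V$ with $|\lambda|\ge\Xi$. This is exactly where the low–pass filter is indispensable: after the Schur reduction of the determinant (valid for $\lambda\ne0$), the behavior for large $|\lambda|$ is governed by a matrix of the shape
\[
\lambda I_n-A_0-\sum_{k=1}^{K}A_k e^{-\lambda\tau_k}-B\,R_1(\lambda;r_1)\Big(K_p+\tfrac{K_i}{\lambda}+\tfrac{\lambda}{\lambda T+1}K_d\,R_2(\lambda;r_2)\Big)R_3(\lambda;r_3)\,C,
\]
in which the dangerous factor $\lambda K_d$ has been turned into $\tfrac{\lambda}{\lambda T+1}K_d$, which stays bounded as $|\lambda|\to\infty$; on $\overline V$ the sum $\sum_k A_k e^{-\lambda\tau_k}$ is bounded because $\Re(\lambda)\ge-\epsilon$, and $R_1,R_2,R_3$ are bounded by $M$ by item~5, so every term other than $\lambda I_n$ is bounded by a constant that does not depend on the $r_i$; hence for $|\lambda|$ beyond that constant the matrix is a norm–small perturbation of $\lambda I_n$ and is therefore invertible. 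This mirrors the bounding computation in the second half of the proof of \Cref{prop:low_pass}, now carrying the extra bounded factors $R_i$ through the estimates; item~2 (full rank of $R_1$ and $R_3$) serves to rule out spurious zeros, and item~5 is also what forbids ``advanced-type'' perturbations that would render the perturbed closed loop ill–posed. What makes this the delicate step is precisely that $\Xi$ must be uniform in $r_1,r_2,r_3$, which is exactly what item~5 secures.

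With $\Xi$ at hand, set $\Omega:=\overline V\cap\{\lambda:|\lambda|\le\Xi\}$, a compact set on whose boundary $H_3(\cdot;T)$ has strictly positive minimum modulus. By the uniform convergence there is an $\hat r'\in(0,\hat r]$ with $\max_{\lambda\in\partial\Omega}|H(\lambda;r_1,r_2,r_3)-H_3(\lambda;T)|<\min_{\lambda\in\partial\Omega}|H_3(\lambda;T)|$ for all $r_1,r_2,r_3\in(0,\hat r')$, so Rouch\'e's theorem yields that $H(\cdot;r_1,r_2,r_3)$ and $H_3(\cdot;T)$ have the same number of zeros in $\Omega$, namely none; together with the a priori bound this shows that $H(\cdot;r_1,r_2,r_3)$ is zero–free on all of $\overline V$, so the perturbed closed loop has spectral abscissa at most $-\epsilon<0$ and is exponentially stable for all $r_1,r_2,r_3\in(0,\hat r')$. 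Since the perturbation triple was arbitrary, this establishes strong stability.
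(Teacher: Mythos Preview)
Your proposal is correct and follows essentially the same approach as the paper: invoke \Cref{prop:low_pass} to fix a stabilizing $T$, use item~5 of \Cref{ass:perturbations} together with the boundedness of $\lambda/(\lambda T+1)$ on $\overline V$ to obtain a modulus bound $\Xi$ on any root of the perturbed characteristic function that is uniform in $r_1,r_2,r_3$, and then apply Rouch\'e on the resulting compact set. Your write-up is in fact more explicit than the paper's (you spell out the Schur reduction and the mechanism behind the bound, whereas the paper simply asserts $\Xi<\infty$ and refers back to the argument of \Cref{prop:low_pass}), but the skeleton is identical.
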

\begin{proof} 
By incorporating the perturbations  $\left\{R_{i}(\lambda;r_{i})\right\}_{i=1}^{3}$ in \eqref{eq:characteristic_function_lowpass_filter}, the characteristic function becomes
\[
\begin{array}{>{$}p{.9\textwidth}<{$}}
H_4(\lambda;T,r_1,r_2,r_3) :=\det
\Bigg(
\lambda
\begin{bmatrix}
I_n  & 0\\
0 & I_q
\end{bmatrix}
- 
\begin{bmatrix}
A_0 & 0 \\
0 & 0
\end{bmatrix}
-  \displaystyle\sum_{k=1}^{K} \begin{bmatrix}
A_k & 0 \\
0 & 0
\end{bmatrix}e^{-\lambda \tau_k} \\
\hfill
-\begin{bmatrix}
BR_1(\lambda;r_1)& 0 \\
0 & I_q
\end{bmatrix}
\begin{bmatrix}
K_p+\frac{\lambda}{\lambda T+1} K_d R_2(\lambda;r_2) & U_i \\
V_i & 0
\end{bmatrix}
\begin{bmatrix}
R_3(\lambda;r_3) C & 0 \\
0 & I_q
\end{bmatrix}
\Bigg)
\end{array}
\]
We choose numbers $N$ and $\hat{r}$ according to item 5 of \Cref{ass:perturbations}. Furthermore, if $BK_d C-I_p$ is Hurwitz and the closed-loop \eqref{eq:sys}-\eqref{eq:pid_control} is stable, then it follows from \Cref{prop:low_pass} that we can choose a sufficiently small $T$ such that there exists a $c\in(0,N)$ for which $H_3(\lambda;T)$ has no root in $V := \{\lambda \in \C : \Re(\lambda) > -c \}$. Subsequently, for all $r_1$, $r_2$ and $r_3$ in the interval $(0,\hat{r})$, the characteristic roots of $H_4(\lambda;T,r_1,r_2,r_3)$ in $V$ are bounded in modulus by
\[
\begin{array}{>{$}p{.9\textwidth}<{$}}
|\lambda_{0}| \leq \sup_{\lambda \in V} \|A_0\| + \sum_{k=1}^{K} \|A_k\| e^{-\Re(\lambda)\tau_k} + \\[7pt] \hfill  \left\|\begin{bmatrix}
BR_1(\lambda;r_1)& 0 \\
0 & I_q
\end{bmatrix}
\begin{bmatrix}
K_p+\frac{\lambda}{\lambda T+1} K_d R_2(\lambda;r_2) & U_i \\
V_i & 0
\end{bmatrix}
\begin{bmatrix}
R_3(\lambda;r_3) C & 0 \\
0 & I_q
\end{bmatrix} \right\|:= \Xi < \infty.
\end{array}
\]
By using Rouch\'e's theorem as in \Cref{prop:low_pass}, it follows that for sufficiently small $r_1$, $r_2$ and $r_3$ the functions $H_4(\cdot;T,r_1,r_2,r_3)$ and $H_3(\lambda;T)$ have the same number of roots in the right half-plane $V$, namely zero. 
\end{proof}

Note that if $CB=0_{m\times p}$, \ie{}the relative degree\footnote{By interpreting the internal dynamics of system \eqref{eq:sys} as an infinite dimensional ordinary differential equation on the head-tail state space $\mathcal{X} := \R^{n} \times \mathcal{C}([-\tau_K,0],\R^{n})$ equipped with the inner product $\big\langle(x_1,\phi_1),(x_2,\phi_2)\big\rangle_{\mathcal{X}} = x_1^{T}x_2 + \int_{-\tau_K}^{0} \phi_1(s)^{T} \phi_2(s)\, ds$, the relative degree of each input-output channel is defined as in \cite[Definition~1.3]{Morris2007}.} of each input-output channel of system \eqref{eq:sys} is larger than one, then stability implies strong stability even without low pass filtering, as shown in the following proposition.
\begin{proposition}
    Assume that the gain matrices $K_p$, $K_d$ and $K_i$ are such that the closed-loop system \eqref{eq:sys}-\eqref{eq:pid_control} is exponentially stable. If $CB=0_{m\times p}$, then the closed-loop of \eqref{eq:sys} and \eqref{eq:pid_control} is also strongly stable.
\end{proposition}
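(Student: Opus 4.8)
The plan is to reuse the Rouché-theorem argument behind \Cref{prop:strong_stability}, exploiting the hypothesis $CB=0$ so as to control the characteristic roots of the \emph{unfiltered} perturbed closed-loop directly, with no low-pass filter needed. Concretely, the perturbed closed-loop of \eqref{eq:sys} and \eqref{eq:pid_control} has characteristic function $H(\lambda;r_1,r_2,r_3):=H_4(\lambda;0,r_1,r_2,r_3)$, \ie{}the function used in the proof of \Cref{prop:strong_stability} with the low-pass filter switched off, so that the derivative action contributes the term $\lambda K_d R_2(\lambda;r_2)$. The first step is to absorb this term into the coefficient of $\lambda$: setting $M_d(\lambda):=B\,R_1(\lambda;r_1)\,K_d\,R_2(\lambda;r_2)\,R_3(\lambda;r_3)\,C$, the equation $H(\lambda;r_1,r_2,r_3)=0$ can be written as
\[
\det\!\left(\lambda\begin{bmatrix} I_n-M_d(\lambda) & 0\\ 0 & I_q\end{bmatrix}-\mathcal{A}(\lambda;r_1,r_3)-\sum_{k=1}^{K}\begin{bmatrix}A_k & 0\\ 0 & 0\end{bmatrix}e^{-\lambda\tau_k}\right)=0,
\]
where $\mathcal{A}(\lambda;r_1,r_3)$ denotes the $A_0$-term together with the perturbed proportional/integral block, which is uniformly bounded on $\{\lambda\in\C:\Re(\lambda)\ge -N\}$ for $r_1,r_3\in(0,\hat r)$ by item~5 of \Cref{ass:perturbations}.

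The crux is the observation that $CB=0$ forces $M_d(\lambda)^2 = B R_1 K_d R_2 R_3\,(CB)\,R_1 K_d R_2 R_3 C = 0$ for every $\lambda\in\C$ and every $r_1,r_2,r_3\ge 0$. Hence $I_n-M_d(\lambda)$ is invertible with inverse $I_n+M_d(\lambda)$; in particular the coefficient of $\lambda$ in $H$ is never singular, so the perturbations cannot turn the closed loop into a genuinely neutral system. This is exactly the property that, in the proof of \Cref{prop:strong_stability}, had to be secured by choosing $T$ small enough (via \Cref{prop:low_pass}); here it holds for free and uniformly in the perturbations. Using item~5 of \Cref{ass:perturbations}, $\|(I_n-M_d(\lambda))^{-1}\|=\|I_n+M_d(\lambda)\|$ is bounded by a constant independent of $\lambda$ and of $r_1,r_2,r_3\in(0,\hat r)$ on $\{\Re(\lambda)\ge -N\}$. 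Multiplying the determinant equation on the left by $\mathrm{diag}\big((I_n-M_d(\lambda))^{-1},I_q\big)$ brings it to the form $\det(\lambda I_{n+q}-X(\lambda))=0$ with $\|X(\lambda)\|$ uniformly bounded on $\{\Re(\lambda)\ge -N\}$, so every characteristic root with $\Re(\lambda)\ge -N$ satisfies $|\lambda|\le\Xi$ for a finite $\Xi$ independent of $r_1,r_2,r_3\in(0,\hat r)$.

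From here I would conclude exactly as in \Cref{prop:low_pass} and \Cref{prop:strong_stability}. Since \eqref{eq:sys}-\eqref{eq:pid_control} is exponentially stable, the spectral abscissa $\alpha$ of $H_0$ is negative; choose $c\in(0,\min\{N,-\alpha\})$ and set $\Omega:=\{\lambda\in\C:\Re(\lambda)\ge -c,\ |\lambda|\le\Xi\}$, so $H_0$ has no zero on $\Omega$ and $\min_{\lambda\in\partial\Omega}|H_0(\lambda)|>0$. By items~1 and~4 of \Cref{ass:perturbations}, and using that $|\lambda|$ is bounded on $\Omega$ (which tames the $\lambda K_d R_2$ term), $H(\cdot;r_1,r_2,r_3)$ converges uniformly to $H_0$ on $\Omega$ as $r_1,r_2,r_3\to 0+$; Rouch\'e's theorem then gives that $H(\cdot;r_1,r_2,r_3)$ and $H_0$ have the same number of zeros in $\Omega$, namely zero, for all sufficiently small $r_1,r_2,r_3$. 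Together with the a priori bound $\Xi$ this rules out any zero of $H(\cdot;r_1,r_2,r_3)$ with $\Re(\lambda)>-c$, so the perturbed closed-loop is exponentially stable; since this holds for every admissible perturbation family, the closed-loop \eqref{eq:sys}-\eqref{eq:pid_control} is strongly stable.

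The only genuinely new ingredient relative to \Cref{prop:strong_stability} is the nilpotency identity $M_d(\lambda)^2=0$, and the one thing to watch is that it (and hence the uniform bound on $(I_n-M_d(\lambda))^{-1}$) must hold simultaneously for all $\lambda$ and all small $r_i$; this is precisely what excludes the sequences of right-half-plane roots escaping to $+\infty$ seen in \Cref{sec:losing_stability}. Granting that, the remaining estimates are routine and I do not expect a substantive obstacle.
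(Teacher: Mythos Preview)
Your proposal is correct and follows essentially the same approach as the paper's proof: both exploit that $CB=0$ makes $M_d(\lambda)=BR_1K_dR_2R_3C$ nilpotent, hence $I_n-M_d(\lambda)$ is invertible with a uniformly bounded inverse on a right half-plane, after which the a~priori bound on characteristic roots and the Rouch\'e argument proceed as in \Cref{prop:low_pass} and \Cref{prop:strong_stability}. The only cosmetic difference is that the paper invokes the Weinstein--Aronszajn identity to deduce nilpotency (the nonzero eigenvalues of $B\cdot(R_1K_dR_2R_3C)$ coincide with those of $(R_1K_dR_2R_3C)\cdot B=R_1K_dR_2R_3(CB)=0$), whereas you compute $M_d(\lambda)^2=0$ directly; your computation even yields the explicit inverse $I_n+M_d(\lambda)$, which slightly streamlines the uniform bound.
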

\begin{proof}
    By incorporating the perturbations $\left\{R_{i}(\lambda;r_{i})\right\}_{i=1}^{3}$ in \eqref{eq:characteristic_function}, the characteristic function becomes
    \begin{equation*}
        \begin{array}{>{$}p{.9\textwidth}<{$}}
        H_5(\lambda;r_1,r_2,r_3) :=
            \det
            \Bigg(
            \lambda
            \begin{bmatrix}
            I_n-BR_1(\lambda;r_1)  K_d R_2(\lambda;r_2) R_3(\lambda;r_3)C & 0\\
            0 & I_q
            \end{bmatrix}-
            \begin{bmatrix}
            A_0 & 0 \\
            0 & 0
            \end{bmatrix}- \\ \hfill
            \displaystyle \sum_{k=1}^{K} \begin{bmatrix}
            A_k & 0 \\
            0 & 0
            \end{bmatrix}e^{-\lambda \tau_k} 
            - \begin{bmatrix}
                BR_1(\lambda;r_1)& 0 \\
                0 & I_q
            \end{bmatrix}
            \begin{bmatrix}
                K_p & U_i \\
                V_i & 0
            \end{bmatrix}
            \begin{bmatrix}
                R_3(\lambda;r_3) C & 0 \\
                0 & I_q
            \end{bmatrix}
            \Bigg).
        \end{array}
    \end{equation*}
    Because $CB=0_{m\times p}$, the Weinstein–Aronszajn identity implies that \[ Q(\lambda;r_1,r_2,r_3) :=
    BR_1(\lambda;r_1)K_d R_2(\lambda;r_2) R_3(\lambda;r_3)C
    \] is nilpotent for all $r_1,r_2,r_3>0$, all $\lambda \in \C$ and all permissible perturbations. This has as a consequence that $I-Q(\lambda;r_1,r_2,r_3)$ is invertible. Further, by the last item of \Cref{ass:perturbations} there exist $\hat{r} > 0$ and $c>0$ such that the norm of $\big(I-Q(\lambda;r_1,r_2,r_3)\big)^{-1}$ is finite in the right half-plane $V:= \left\{\lambda \in \C : \Re(\lambda) > -c \right\}$ for all $r_1$, $r_2$ and $r_3$ in the interval $(0,\hat{r})$. Using a similar approach as in the proof of \Cref{prop:low_pass}, one can show that the modulus of a characteristic root of $H_5(\cdot;r_1,r_2,r_3)$ in the right half-plane $V$ is finite. Furthermore, because $H_5(\cdot;r_1,r_2,r_3)$ uniformly converges to $H_0(\cdot)$ on compact regions in the complex plane, the proposition now follows from a similar argument as in the proof of \Cref{prop:low_pass}.
\end{proof}

Next, we present a condition under which there does not exist a feedback matrix $K_d$ such that the closed-loop system \eqref{eq:sys}-\eqref{eq:pid_lowpass_filter} is strongly stable.
\begin{proposition}
	\label{prop:necessary_condtion_PD}
	If for given gain matrices $K_p$ and $K_i$, and $K_d = 0_{m\times p}$, the characteristic function $H_0(\cdot)$ has an odd number of roots in the closed right half-plane then there does not exist a matrix $K_d$ such that the closed-loop system \eqref{eq:sys}-\eqref{eq:pid_control} is stable and $BK_dC-I$ is Hurwitz; and as a consequence there does not exist a matrix $K_d$ such that the closed-loop system \eqref{eq:sys}-\eqref{eq:pid_lowpass_filter} is strongly stable for all sufficiently small $T$. 
\end{proposition}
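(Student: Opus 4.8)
The plan is to argue by a continuity/homotopy argument on the number of right half-plane roots, combined with the characterization of strong stability obtained earlier. First I would suppose, for contradiction, that there exists a $K_d$ such that the closed-loop \eqref{eq:sys}-\eqref{eq:pid_control} is exponentially stable \emph{and} $BK_dC - I_n$ is Hurwitz. Exponential stability means the associated characteristic function $H_0(\cdot)$ (built with this $K_d$) has no roots in the closed right half-plane. Keeping $K_p$ and $K_i$ fixed, I would introduce the homotopy $K_d(s) := s\,K_d$ for $s\in[0,1]$, connecting the given matrix ($s=1$) to $K_d = 0_{m\times p}$ ($s=0$), and track the number of characteristic roots of the corresponding family $H_0(\cdot;s)$ in $\crhp$.

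The key steps, in order: (i) Observe that for every $s\in[0,1]$ the leading matrix coefficient is $\mathrm{diag}(I_n - s\,BK_dC,\, I_q)$; I would show this is invertible for all $s\in[0,1]$, which follows because the eigenvalues of $s\,BK_dC$ lie on the segment from $0$ to an eigenvalue of $BK_dC$, and $BK_dC - I_n$ being Hurwitz forces all eigenvalues of $BK_dC$ to have real part $<1$, hence $1$ is never an eigenvalue of $s\,BK_dC$. Consequently $H_0(\cdot;s)$ is, for each $s$, the characteristic function of a well-posed (retarded-type after pre-multiplying by the inverse of the leading coefficient) delay system with no chain of roots escaping to $+\infty$; in particular the number of its roots in $\crhp$ is finite. (ii) Establish that this number cannot change as $s$ varies: a change would require a root to cross the imaginary axis, but I would rule out crossings by continuity of the roots in $s$ together with a uniform a~priori bound on the modulus of any imaginary-axis root — such a bound comes from the same Rouché-type estimate used in the proof of \Cref{prop:low_pass}, using that $\mathrm{diag}((I_n - s\,BK_dC)^{-1}, I_q)$ is uniformly bounded on a neighborhood of the imaginary axis, uniformly in $s\in[0,1]$ (this uses the Hurwitz hypothesis to keep $I_n - s\,BK_dC$ away from singularity for $\Re(\lambda)\ge -\epsilon$). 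Hence the parity of the number of $\crhp$-roots of $H_0(\cdot;s)$ is constant on $[0,1]$. (iii) At $s=1$ the count is $0$ (even) by the stability assumption; at $s=0$ the count is, by hypothesis, odd — contradiction. This proves the first claim: no such $K_d$ exists. The second claim is then immediate from \Cref{prop:strong_stability}'s converse reading: strong stability of \eqref{eq:sys}-\eqref{eq:pid_lowpass_filter} for all sufficiently small $T$ would, via \Cref{prop:low_pass} (the Hurwitz direction being also essentially necessary for stability under vanishing $T$, as the low-pass filter case with real part $>1$ destroys stability), require $BK_dC - I_n$ Hurwitz together with nominal stability of \eqref{eq:sys}-\eqref{eq:pid_control}, which we have just shown is impossible.

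I expect the main obstacle to be step (ii): making rigorous that the number of closed-right-half-plane roots is a homotopy invariant along $s\mapsto s K_d$. The subtlety is that $H_0(\cdot;s)$ is an entire function whose roots form an infinite set accumulating only at $\Re(\lambda)=-\infty$ (because of the invertible, $s$-uniformly-bounded leading coefficient), so one cannot simply invoke a naive argument principle on an unbounded region; instead one fixes a vertical strip $\{-\epsilon \le \Re(\lambda)\}$, truncates to a large rectangle, and applies Rouché, needing the uniform modulus bound on roots near the imaginary axis to guarantee nothing escapes the rectangle as $s$ varies. A secondary point needing care is the precise logical form of the second claim: one must spell out that strong stability (in the sense of the definition) of \eqref{eq:sys}-\eqref{eq:pid_lowpass_filter} entails, in particular, exponential stability of the \emph{unperturbed} system \eqref{eq:sys}-\eqref{eq:pid_lowpass_filter}, which as $T\to 0+$ forces — by the instability direction of \Cref{prop:low_pass} — that $BK_dC$ has no eigenvalue with real part $>1$, and a slightly sharper version of that argument (ruling out eigenvalues on the line $\Re=1$ as well, via the finite-difference/low-pass chain of roots landing exactly on the imaginary axis) gives the full Hurwitz conclusion, closing the loop with the first part.
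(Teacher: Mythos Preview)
Your overall strategy --- the homotopy $s\mapsto sK_d$ connecting $K_d=0$ to the putative $K_d$, tracking right-half-plane roots --- is the same as the paper's, reframed as a contrapositive. Your step~(i) is correct and is exactly the mechanism that rules out roots escaping to infinity along the homotopy.

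The genuine gap is step~(ii). You claim to ``rule out crossings'' of the imaginary axis using continuity of roots and a uniform modulus bound, but this is neither possible nor what is needed: as $s$ varies, roots \emph{can} and generically \emph{do} cross the imaginary axis at bounded nonzero points, so the root count in $\crhp$ is not constant along the homotopy, and no Rouch\'e argument on a rectangle will make it so (Rouch\'e requires no roots on the contour). What the paper actually uses is a \emph{parity} argument with two ingredients you are missing. First, since all matrices are real, nonreal roots cross the imaginary axis in complex-conjugate pairs, so such crossings change the count by an even number. Second, the only single-root (parity-changing) crossings are through the origin or through infinity; the origin is ruled out because $H_0(0;s)$ does not depend on $s$ (the term $sBK_dC$ enters only through $\lambda\cdot sBK_dC$, which vanishes at $\lambda=0$), so a root at the origin would persist for all $s$ and contradict stability at $s=1$. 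Your Hurwitz assumption handles infinity. With these two observations in place of your step~(ii), the contradiction (odd at $s=0$, even at $s=1$) follows.

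For the second claim, the paper's route is shorter than yours. In the paper's direct argument, the first part produces not merely ``$BK_dC-I$ not Hurwitz'' but the sharper conclusion that $BK_dC$ has a \emph{real eigenvalue strictly greater than}~$1$ (namely $1/\hat k$ with $\hat k\in(0,1)$). Then the instability direction of \Cref{prop:low_pass} immediately gives that the filtered system is unstable for all small~$T$, hence not strongly stable. This bypasses the borderline case $\Re(\lambda)=1$ that you flag as needing a ``sharper version''.
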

\begin{proof}
	We use a similar continuation argument as in \cite[Theorems 5.1]{Michiels2009}. More specifically, assume that $K_d^{\star}$ is such that the closed-loop \eqref{eq:sys}-\eqref{eq:pid_control} is stable. Consider now the feedback law: $u(t) =K_p y(t) + k K_d^{\star} \dot{y}(t) + K_i \int_{0}^{t} y(s)\, ds$ for $k\in[0,1]$. For $k = 0$ the corresponding characteristic function has an odd number of roots in right half-plane, while for $k=1$ there are no right half-plane roots. By increasing $k$ from 0 to 1, all roots are thus moved to the left half-plane. Furthermore, if we vary $k$ these characteristic roots move in complex conjugate pairs as all considered matrices are real-valued. Therefore, to bring an odd number of characteristic roots to the left half-plane, at least one root has to pass through either the origin or infinity. The former is however not possible as a characteristic root in the origin is invariant with respect to changes in $k$ and the closed-loop system is stable for $k=1$. On the other hand, a root crossing through infinity means that there exists a $\hat{k} \in (0,1)$ such that $\det(I-\hat{k}BK_d^{\star}C) = 0$, which implies that $BK_d^{\star}C$ has a real-valued root $1/\hat{k}>1$. The second part of the proposition now follows from \Cref{prop:low_pass}.
\end{proof}

We conclude the subsection with a comment on parametric uncertainty.
\begin{remark} \label{remparametric}
If the control law \eqref{eq:pid_lowpass_filter} achieves strong stability, then the asymptotic stability is robust not only against infinitesimal perturbations satisfying Assumption~\ref{ass:perturbations} and visualized in Figure~\ref{fig:strong_stability_framework} but also against small perturbation on system matrices $A_0,\ldots,A_K$, $B$, $C$ and delays $\tau_1,\ldots,\tau_K$. This result directly follows from the strong stability criterion in Proposition~\ref{prop:strong_stability}.
\end{remark}

\subsection{Illustration of theory using a third-order SISO system}
\label{subsec:illustration_third_order}
In this subsection, we illustrate the results of \Cref{subsec:strong_stability,sec:losing_stability} using the following third-order, single-input single-output system
\begin{equation}
\label{eq:example_3rd_order}
\left\{
\begin{array}{rcl}
\dot{x}(t)& =& \begin{bmatrix}
-1 & 1/3 & 1 \\
1 & 0 & 0 \\
0 & 1 & 0
\end{bmatrix} x(t) + \begin{bmatrix}
2 \\ 0 \\ 0
\end{bmatrix} u(t) \\[20px]
y(t) &=&\begin{bmatrix}
0.5 & 0 & 0.5
\end{bmatrix}x(t)
\end{array}
\right.
\end{equation}
with corresponding transfer function
\[
\frac{s^2+1}{s^3+s^2-(1/3)s-1}.
\]

It is verified in \Cref{appendix:example_3rd_order_PI} that this system can not be stabilized with either P or PI output feedback control. However, as shown next, this system can be stabilized with a PD output feedback controller of the following form
\begin{equation}
\label{eq:third_order_example_controller}
u(t) = k_p y(t) + k_d \dot{y}(t),
\end{equation}
with $k_p \in \R$ and $k_d\in\R$.
The characteristic function $H_0(\cdot)$ reduces in this case to the following polynomial of degree three:
\begin{equation}
\label{eq:example_3rd_order_characteristic_function}
 (1-k_d)\lambda^3+(1-k_p)\lambda^2+\left(-k_d-(1/3)\right)\lambda-k_p-1.
\end{equation}
\Cref{fig:nb_rh_poles} shows the number of right half-plane roots of this polynomial in function of $k_p$ and $k_d$ and can be understood as follows. By the Routh-Hurwitz stability criterion for polynomials of degree three, \eqref{eq:example_3rd_order_characteristic_function} has no roots in the closed right half-plane for $(k_p,k_d)$ in the set 
\[
\{(k_p,k_d): k_d<1, k_p<-1 \text{ and } k_d < \frac{1}{3} + \frac{2}{3} k_p \} \cup \{(k_p,k_d): k_d>1, k_p>1 \text{ and } k_d < \frac{1}{3} + \frac{2}{3} k_p \}.
\]
Further, the number of roots in the closed right half-plane can only change in two ways. Firstly, eigenvalues can cross the imaginary axis. We therefore examine the pairs $(k_p,k_d)$ for which \eqref{eq:example_3rd_order_characteristic_function} has a root on the imaginary axis:
\begin{equation*}
\label{example_third_order_imaginary_axis}
-\jmath\omega^{3}(1-k_d)-\omega^{2}(1-k_p)-\jmath\omega(k_d+\frac{1}{3})-k_p-1 = 0.
\end{equation*}
By splitting the real and imaginary part we get:
\[
\begin{aligned}
-\omega^{2}(1-k_p)-k_p-1 &=0 \\
-\omega^{3}(1-k_d)-\omega(k_d+(1/3)) &=0.
\end{aligned}
\]
For $k_p = -1$ and $k_d$ arbitrary, \eqref{eq:example_3rd_order_characteristic_function} thus has an root at the origin ($\omega^{\star}=0$) and for $k_p \in (-\infty,-1) \cup (1,\infty)$ and $k_d = \frac{1}{3} + \frac{2}{3} k_p$, \eqref{eq:example_3rd_order_characteristic_function} has a pair of complex conjugate roots on the imaginary axis at $\omega^{\star} = \pm \sqrt{\frac{-k_p-1}{1-k_p}}$. Furthermore, the crossing direction at these critical parameter values follows from: 
\[
\left.\frac{\partial \Re(\lambda)}{\partial k_p}\right|_{\lambda=\jmath\omega^{\star}} = \Re\left( \frac{-{\omega^{\star}}^2+1}{-3{\omega^{\star}}^{2}(1-k_d)+2(1-k_p)\jmath{\omega}^{\star}-(k_d+1/3)} \right).
\]
Secondly, the number of right half-plane roots can change by roots moving over infinity at $k_d = 1$. More precisely, for $k_d = 1-\epsilon$ and $\epsilon$ sufficiently small, \eqref{eq:example_3rd_order_characteristic_function} has a root at approximately $\frac{-1+k_p}{\epsilon}$. For $k_p>1$, there thus is a root that moves from the right half-plane to the left half-plane as $k_d$ increases from $1-$ to $1+$. For $k_p<1$, this root moves in the opposite direction.

The closed-loop system \eqref{eq:example_3rd_order}-\eqref{eq:third_order_example_controller} is thus stable for $(k_p,k_d)$ inside the set \[\{(k_p,k_d): k_d<1, k_p<-1 \text{ and } k_d < \frac{1}{3} + \frac{2}{3} k_p \} \cup \{(k_p,k_d): k_d>1, k_p>1 \text{ and } k_d < \frac{1}{3} + \frac{2}{3} k_p \}.\] However, from the theory developed in \Cref{sec:losing_stability,subsec:low_pass_filter}, it follows that for $|k_d|>1$ stability is lost under arbitrarily small feedback delay and that for $k_d>1$ stability is lost under both a finite-difference approximation of the derivative and the inclusion of a low-pass filter. Furthermore, it can be shown that when considering these three implementation errors and assuming that these errors are sufficiently small, the stability of the closed-loop system without low-pass filtering is only preserved for $(k_p,k_d)$ in the set
\[
\left\{(k_p,k_d): k_d>-1, k_p<-1 \text{ and } k_d< \frac{1}{3} + \frac{2}{3} k_p \right\}.
\]
If we consider the perturbation class introduced in \Cref{subsec:strong_stability}, it follows from \Cref{prop:strong_stability} that the closed-loop system with low pass filtering is strongly stable if $(k_p,k_d)$ lies in the set, 
\[
\left\{(k_p,k_d):  k_p<-1 \text{ and } k_d< \frac{1}{3} + \frac{2}{3} k_p \right\}.
\] These different stability regions are indicated on \Cref{fig:nb_rh_poles} using hatching.


 \Cref{fig:nb_rh_poles} can also be used to illustrate \Cref{prop:necessary_condtion_PD}.
\begin{itemize}
	\item For $k_p> -1$ and $k_d = 0$, the characteristic function has one root in the right half-plane. For fixed $k_p$, this root can be brought to the left half-plane by increasing $k_d$ until $1<k_d<\frac{1}{3}+\frac{2}{3}k_p$. However, as expected from the proof of \Cref{prop:necessary_condtion_PD}, the root moves from the right to the left half-plane via infinity. As a consequence, for the stabilizing values of $k_d$, the matrix $Bk_dC$ has a real eigenvalue larger than one. The corresponding closed-loop system is hence not strongly stable. Furthermore, adding a low-pass filter to the control loop will not fix this robustness problem as the low pass filter itself is destabilizing.
	\item On the other hand, for $k_p<-1$ and $k_d=0$, the characteristic function has two roots in the right half-plane. For fixed $k_p$, these two right half-plane roots can be brought to the left half-plane via the imaginary axis by decreasing $k_d$ until $k_d<\frac{1}{3}+\frac{2}{3}k_p$. It now follows from \Cref{prop:strong_stability} that the closed-loop system with loop pass filtering is strongly stable.
\end{itemize}  
Finally, the uncontrolled system ($k_p = 0$ and $k_d = 0$) is unstable and has an odd number of right half-plane roots. Thus, in a strongly stabilizing PD controller with low pass filtering the $k_p$ and $k_d$ parameters can be understood as follows. The P part brings an additional root to the closed right half-plane, \ie{}the P part on its own is destabilizing. The D part subsequently moves these two right half-plane roots to the left half-plane via the imaginary axis.
\begin{figure}[!htbp]
	\centering
\begin{tikzpicture}
	\draw[->,gray!60] (-4,0)--(4,0) node[right,black]{$k_p$};
	\draw[->,gray!60] (0,-7/3)--(0,3) node[above,black]{$k_d$};
	\draw[-,black,line width=0.4mm] (-4,1) -- (4,1);
	\draw[dash pattern={on 7pt off 2pt on 2pt off 2pt on 2pt off 2pt},black,line width=0.4mm] (-4,-7/3) -- (-1,-1/3);
	\draw[dash pattern={on 7pt off 2pt on 2pt off 2pt on 2pt off 2pt},black,line width=0.4mm] (1,1)-- (4,3);
	\draw[dash pattern={on 7pt off 2pt on 1pt off 3pt},black,line width=0.4mm] (-1,3) -- (-1,-7/3);
	\draw[dashed,gray!80] (-4,-1) -- (4,-1);
	\node [circle,fill=black,inner sep=2pt,minimum size=2pt,label=right:{\small(-1,-$\frac{1}{3}$)}] at (-1,-1/3) {}; 
	\node[circle,fill=black,inner sep=2pt,minimum size=2pt,label=above right:{\small(-1,1)}] at (-1,1) {};
	\node[circle,fill=black,inner sep=2pt,minimum size=2pt,label=below:{\small(1,1)}] at (1,1) {};

	\node[draw=gray!60,circle] (rhp2) at (-2,0.5) {2};
	\node[draw=gray!60,circle] (rhp22) at (0.5,2) {2};
	\node[above,yshift=2ex,rotate=33.7,line width=0.3mm] (w0) at (-3/2,-2/3) {$\omega=0$};
	\node[above,yshift=2ex,rotate=33.7,line width=0.3mm] (w0) at (-4,-7/3) {$\omega=1$};
	\draw[->,yshift=1ex,xshift=-1ex] (-3/2,-2/3) -- (-1,-1/3);
	\draw[->,yshift=1ex,xshift=-1ex] (-3.5,-2) -- (-4,-7/3);
	
	\draw[->,red] (0,0.1) -- (-2.5,0.1);
	\draw[->,red] (-2.5,0) -- (-2.5,-1.75);
	
	\node[above,yshift=2ex,rotate=33.7,line width=0.3mm] (w0) at (1,1) {$\omega=\infty$};
	\node[above,yshift=2ex,rotate=33.7,line width=0.3mm] (w0) at (3.5,8/3) {$\omega=1$};
	\draw[->,yshift=1ex,xshift=-1ex] (3/2,4/3) -- (1,1);
	\draw[->,yshift=1ex,xshift=-1ex] (3.5,8/3) -- (4,3);
	
	\node[draw=gray!60,circle] (rhp3) at (-2,2) {3};
	\node[draw=gray!60,circle] (rhp1) at (0.5,-0.5) {1};
	\fill[pattern=dots] (1,1) -- (4,3) -- (4,1) -- cycle;
	\fill[pattern=grid] (-1,-1/3) -- (-1,-1) -- (-2,-1) -- cycle;
	\fill[pattern=south east lines] (-1,-1/3) -- (-1,-7/3) -- (-4,-7/3) -- cycle;
	\node[circle,fill=white,draw=gray!60] (stab2) at (3.5,2) {0};
	\node[circle,fill=white,draw=gray!60] (stab1) at (-2,-1.75) {0};
	\node[rectangle,pattern=dots,minimum width =10pt,minimum height=8pt,label=right:{PD controller is stabilizing but not robust against any perturbation type of \Cref{sec:losing_stability,subsec:low_pass_filter}}] at (-6.7,-2.7) {};
	\node[rectangle,pattern=grid,minimum width =10pt,minimum height=8pt,anchor=center, label=right:{PD controller is stabilizing and robust against the perturbations of \Cref{sec:losing_stability,subsec:low_pass_filter}}] at (-6.7,-3.2) {};
	\node[rectangle,pattern=south east lines,minimum width =10pt,minimum height=8pt,label=right:{PD controller with low pass filtering is strongly stabilizing}] at (-4.4,-3.7) {};
	\draw[-] (-4.1,-4.3) -- (-3.4,-4.3) node[right] {Crossing via "infinity"};
	\draw[dash pattern={on 7pt off 2pt on 1pt off 3pt}] (0.5,-4.3) -- (1.2,-4.3) node[right] {Crossing at origin};
	\draw[dash pattern={on 7pt off 2pt on 2pt off 2pt on 2pt off 2pt}] (-4.1,-4.9) -- (-3.3,-4.9) node[right] {Crossing via imaginary axis at $\omega = \pm \sqrt{\frac{-k_p-1}{-k_p+1}}$};
\end{tikzpicture}
\caption{The number of roots of \eqref{eq:example_3rd_order_characteristic_function} in the closed right half-plane in function of $k_p$ and $k_d$ (grey circles). Transitions between the different regions correspond to eigenvalues crossing via ``infinity" (full line), the origin (dot dashed) or the imaginary axis (dot dot dashed). The parameter pairs for which the PD controller is stable but not robust against any of the three types of perturbations discussed in \Cref{sec:losing_stability,subsec:low_pass_filter}, for which the PD controller is stable and robust against these perturbations, and for which the PD controller with low-pass filtering is strongly stable are indicated using hatching.}
\label{fig:nb_rh_poles}
\end{figure}
\section{Design procedure for strongly stabilizing controllers}
\label{sec:numerical_algorithm}
In this section, we describe a computational procedure to design a PID output feedback controller with a low pass filtering of the derivative signal which strongly stabilizes system \eqref{eq:sys}. To synthesize such controllers, we will minimize the spectral abscissa of \eqref{eq:characteristic_function}, \ie{}the real part of its right-most characteristic root, in function of elements of the feedback matrices $K_p$, $K_d$ and $K_i$, under the constraint that $BK_dC-I_n$ is Hurwitz. This leads to the following design procedure:

\begin{enumerate}
	\item choose initial controller matrices $K_p$, $K_d$ and $K_i$;
	\item if $\alpha(B K_{d} C ) = \max \{\Re(\lambda): \det(I_n\lambda-B K_d C )  = 0\} > 0.9$, then re-scale $K_d$ by $0.9/\alpha(B K_{d}C)$
	\item minimize the spectral abscissa of \eqref{eq:characteristic_function} 
	\[
	\begin{array}{>{$}p{.9\textwidth}<{$}}
	f(K_p,K_d,K_i) = \max_{\lambda\in\mathbb{C}} \Big\{\Re(\lambda):\ \det \Big(\lambda (I_n-B K_d C)- A_0 - \\ \hfill \sum_{k=1}^{K} A_k e^{-\lambda \tau_k} - B K_p C -\frac{B K_i C}{\lambda} \Big)=0  \Big\}
	\end{array}
	\]
	as function of the elements of $K_p$, $K_d$ and $K_i$ under the constraint that
	\[
	\alpha(B K_d C) = \max \{ \Re(\lambda) : \det(\lambda I_m - B K_d C) = 0 \}<1;
	\]
	\item choose a sufficiently small $T$ such that \eqref{eq:sys}-\eqref{eq:pid_lowpass_filter} is exponentially stable.
\end{enumerate}
The constraint in step 3 ensures that the condition in \Cref{prop:strong_stability} is fulfilled. This constraint is handled using a penalty method. 
More precisely, we minimize
\begin{equation}
\label{eq:penalty_method}
f(K_p,K_d,K_i) + t \max\left(0,\alpha(B K_d C) - 1\right),
\end{equation}
in the elements of the control matrices, where $t$ is increased until the resulting $K_d$ fulfills the constraint. Note, however, that this function is in general non-convex and, as a consequence, it might have multiple local optima. Therefore, to avoid ending up at a bad local minimum, we restart the optimization procedure from several initial points.

Next, we illustrate the effectiveness of this method on three example problems.
\begin{example}
\label{ex:design_1}
We start with designing a controller for the system examined in \Cref{subsec:illustration_third_order}. As initial parameters we use $(k_p,k_d) = (1.5,1.2)$ and $t=10^2$. This results in $(k_p,k_d)=(-1.08015,-1.04045)$. It thus follows from \Cref{fig:nb_rh_poles} that the resulting controller with low-pass filtering is strongly stable for sufficiently small $T$. To indicate the importance of rescaling the initial $K_d$ and the constraint in step 3, we redesign the controller starting from the same initial parameters without these components. This results in the controller $(k_p,k_d)=(1.26832,1.01777)$. The closed-loop system without filter is now stable, but stability is lost when adding the low pass filter. \hfill $\diamond$
\end{example}
\begin{example}
\label{ex:design_state_delay}
Secondly, we consider a system of form \eqref{eq:sys} with $n=6$, $m=3$, $p=2$ and $K=3$. The system matrices and delays are given in \Cref{appendix:design_state_delay_system_definition}. The open-loop system has 5 right half-plane poles, the right-most of which are a complex conjugate pair located at $2.607\pm  \jmath 2.144$. After running the design procedure starting from a zero initial controller and $t=10^5$ we obtained the feedback matrices given in \Cref{appendix:design_state_delay_system_definition}. The closed-loop system with low-pass filter ($T=10^{-7}$) is now exponentially stable with a decay rate of 0.1768. Furthermore, the real part of all eigenvalues of $B K_d C$ is smaller than one, thus the closed-loop with low pass filter is strongly stable. \hfill $\diamond$
\end{example}
\begin{example}
\label{ex:design_quadcopter}
	Finally, as last example, we consider the stabilization of a quadcopter around its equilibrium point, \ie{}hovering at a fixed position and a fixed orientation of the principal axes. We use the twelve dimensional linearized model given in \cite[Equations (6.4-6.7)]{Jirinec2011} for the parameters given in \cite[Section 4]{Jirinec2011}. By choosing an appropriate output matrix $C$ we obtain the following system of form \eqref{eq:sys}:
	\begin{equation}
	\label{eq:sys_quadcopter}
	\left\{
		\begin{array}{rl}
		
		\Delta\dot{x}(t) =& \setlength{\arraycolsep}{2.5pt} \begin{bmatrix}
		0_{3\times 3} & I_3 & 0_{3\times 3} & 0_{3\times 3} \\
		0_{3\times 3} & 0_{3\times 3} & \begin{bmatrix}
		0 & -g & 0 \\
		g & 0 & 0 \\
		0 & 0 & 0
		\end{bmatrix} & 0_{3 \times 3} \\
		0_{3\times 3} & 0_{3\times 3} & 0_{3 \times 3} & I_3\\
		0_{3\times 3} & 0_{3\times 3} & 0_{3 \times 3} & 0_{3 \times 3}
		\end{bmatrix} \Delta x(t) + \begin{bmatrix}
		0_{3\times 4} \\
		\begin{bmatrix}
		0 & 0 & 0 & 0\\
		0 & 0 & 0 & 0\\
		\frac{-2b\Theta_0}{m} & \frac{-2b\Theta_0}{m}& \frac{-2b\Theta_0}{m} & \frac{-2b\Theta_0}{m} \\
		\end{bmatrix} \\
		0_{3\times 4} \\
		\begin{bmatrix}
		0 & \frac{2lb \Omega_0}{I_x} & 0 & \frac{-2lb \Omega_0}{I_x}\\
		\frac{2lb \Omega_0}{I_y} & 0 & \frac{-2lb \Omega_0}{I_y} & 0\\
		\frac{-2d \Omega_0}{I_z} & \frac{2d \Omega_0}{I_z} & \frac{-2d \Omega_0}{I_z}& \frac{2d \Omega_0}{I_z}\\
		\end{bmatrix}
		\end{bmatrix} \Delta u(t) \\[55pt]
		y(t) =& \begin{bmatrix}
		I_3 & 0_{3\times 1} & 0_{3\times 1} & 0_{3\times 1} & 0_{3\times 3} & 0_{3\times 1} & 0_{3\times 1} & 0_{3\times 1} \\
		0_{1\times 3} & 1 & 1 & 1 & 0_{1\times 3 } & 0 & 0 & 0 \\
		0_{3\times 3} & 0_{3\times 1} & 0_{3\times 1} & 0_{3\times 1} & I_3 & 0_{3\times 1} & 0_{3\times 1} & 0_{3\times 1} \\
		0_{1\times 3} & 0& 0 & 0 & 0_{1 \times 3} & 0 & 0 & 1
		\end{bmatrix}\, \Delta x(t)
		\end{array}
		\right.
	\end{equation}
	with $g = 9.8\, \mathrm{m\,s^{-2}}$, $m = 1.32\, \mathrm{kg}$, $b = 1.5108\cdot10^{-5}\, \mathrm{kg\,m}$, $l = 0.214\, \mathrm{m}$, $I_x = 9.3\cdot10^{-3}\, \mathrm{kg\,m^{2}}$, $I_y = 9.2\cdot10^{-3}\,\mathrm{kg\,m^{2}}$, $I_z = 151\cdot10^{-2}\,\mathrm{kg\,m^{2}}$, $d = 4.406\cdot10^{-7}\, \mathrm{kg\,m^2\,s^{-1}}$ and $\Omega_0 = \sqrt{(mg/(4b))}$.
	 Starting from ten random initial parameter values and fixing $t$ to $10^2$, resulted in ten different PID controllers with low-pass filtering that each strongly stabilize \eqref{eq:sys_quadcopter} for sufficiently large cut-off frequencies. Next, we examine the controller with the best performance, \ie{}the controller matrices that resulted in the smallest $f(K_p,K_d,K_i)$ and which is given in \Cref{appendix:gain_matrices_quadcopter}, and as cut-off frequency we choose $1/T=10^{6}\ \mathrm{s}^{-1}$. By computing the characteristic roots of \eqref{eq:characteristic_function_lowpass_filter}, we find that the closed-loop system is exponentially stable with a decay rate of $0.7526$. \Cref{fig:spectrum_quadcopter} shows these characteristic roots. As expected, four characteristic roots lie at approximately $\frac{\lambda_i-1}{T}$ with $\lambda_i$ the  eigenvalues of $K_d C B$. The real part of almost all remaining roots is approximately equal to the spectral abscissa, which is a typical phenomenon for the direct optimization framework. \hfill $\diamond$
	 
	 \begin{figure}[!htbp]
	 	\begin{subfigure}{0.45\linewidth}
	 		\includegraphics[width=\linewidth]{./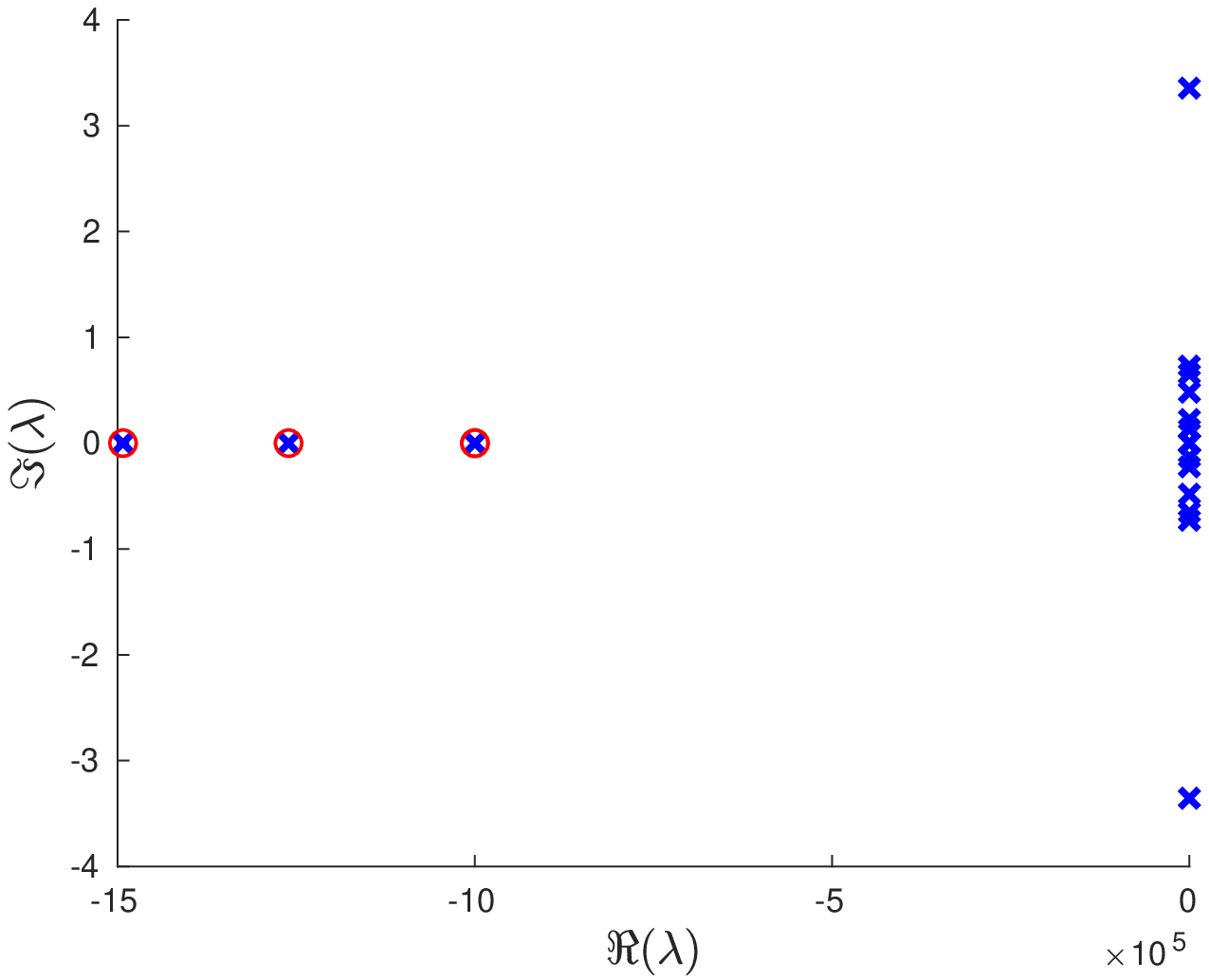}
	 	\end{subfigure}
 		\begin{subfigure}{0.45\linewidth}
 			\includegraphics[width=\linewidth]{./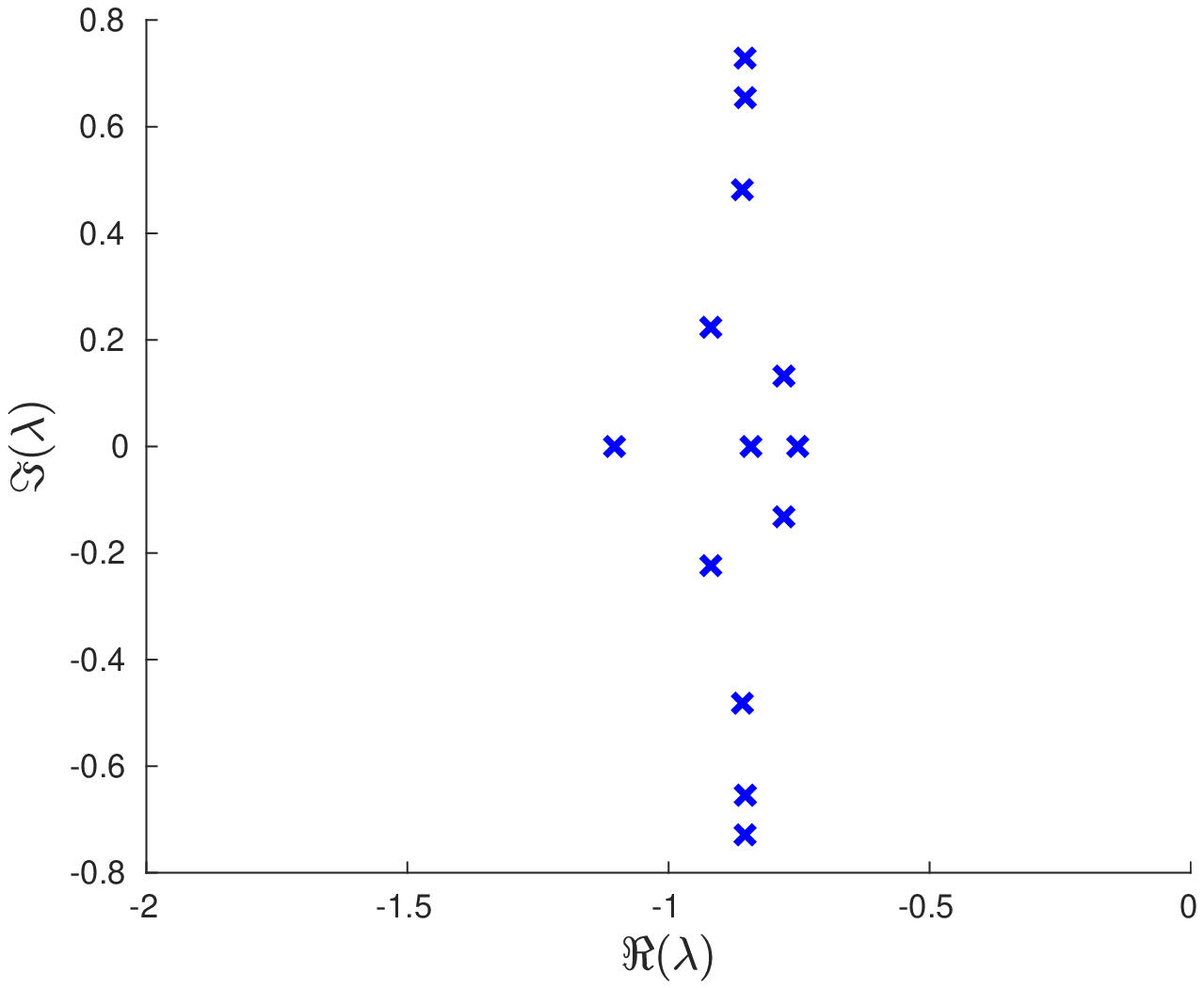}
 		\end{subfigure}
 		\caption{Blue crosses: roots of the instance of characteristic function \eqref{eq:characteristic_function_lowpass_filter} associated with the closed-loop quadcopter system for the controller matrices that minimized $f(K_p,K_d,K_i)$ and $\frac{1}{T}=10^{6}\ \mathrm{s}^{-1}$. In the right-hand panel we zoomed in on the rightmost characteristic roots.\\
 		Red circles: eigenvalues of $\frac{K_d C B-I_{4}}{T}$.}
 		\label{fig:spectrum_quadcopter}
	 \end{figure}
\end{example}
\section{Extensions}
\label{sec:generalization}

We present two extensions of the previous results. First, we consider systems which include an input delay in the nominal model.   
Second, we briefly comment on incorporating  bounded (non-vanishing) perturbations on the system matrices and nominal delays, as so far we dealt with analyzing and resolving the sensitivity of stability with respect to infinitesimal perturbations (see also Remark~\ref{remparametric}).

\subsection{Systems with input delay}
This subsection considers systems with a fixed input delay $\tau_u > 0$,

\begin{equation}
\label{eq:sys_input_delay}
\left\{\begin{array}{l}
\dot x(t) = A_0\, x(t) + \sum_{k=1}^{K} A_k\, x(t-\tau_k) +B\, u(t-\tau_u),\\
y(t)=C\, x(t). 
\end{array}\right.
\end{equation}
The stability of the nominal closed-loop system is now characterized by the roots of \eqref{eq:characeristic_function_delayed_feedback}, where $r$ is fixed to $\tau_u$. It thus follows from a similar argument as in the proof of \Cref{prop:delayed_feedback}, that $\rho(BK_d C)< 1$ is a necessary condition for the exponential stability of the nominal system. In the following proposition, we examine strong stability for systems with input delay. In contrast to systems without input delay, there is now no additional constraint (besides stabilizing the system) on $K_d$ to achieve strong stability using a low pass filter with a sufficiently large cut-off frequency.
\begin{proposition}
\label{prop:strong_stability_input_delay}
	Assume that the gain matrices $K_p$, $K_d$ and $K_i$ are such that the closed-loop system \eqref{eq:sys_input_delay}-\eqref{eq:pid_control} is exponentially stable. Then the closed-loop of \eqref{eq:sys_input_delay} and \eqref{eq:pid_finite_difference} is strongly stable.
\end{proposition}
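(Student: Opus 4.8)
The plan is to mimic the proof of \Cref{prop:strong_stability}, but to exploit the fact that the nominal model already contains the input delay $\tau_u>0$, which means the "bare" (unfiltered) derivative term $\lambda K_d$ in the loop of \Cref{fig:strong_stability_framework} always appears multiplied by $e^{-\lambda \tau_u}$. Concretely, I would first write down the characteristic function of the closed loop of \eqref{eq:sys_input_delay} with the perturbed finite-difference controller \eqref{eq:pid_finite_difference}, incorporating the perturbations $\{R_i(\lambda;r_i)\}_{i=1}^3$ exactly as in the proof of \Cref{prop:strong_stability}. The only structural change relative to $H_4$ is that the block coupling the derivative action to the plant carries a factor $e^{-\lambda\tau_u}$ (from $B\,u(t-\tau_u)$), and the $\frac{\lambda}{\lambda T+1}$ low-pass factor is replaced by $\frac{1-e^{-\lambda r_2}}{r_2}$ coming from the finite-difference approximation — but here we are not adding a low-pass filter at all, so $T$ does not appear.

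Next, the key observation: for $\Re(\lambda)\ge -N$ the factor $e^{-\lambda\tau_u}$ is bounded by $e^{N\tau_u}$, and, crucially, on the part of the right half-plane where $|\lambda|$ is large the term $\lambda\,e^{-\lambda\tau_u}K_d R_2(\lambda;r_2)$ no longer dominates: since $R_2(\lambda;r_2)=\frac{1-e^{-\lambda r_2}}{\lambda r_2}$ is uniformly bounded for $\Re(\lambda)\ge -N$ by item 5 of \Cref{ass:perturbations} (indeed $\frac{1-e^{-\lambda r_2}}{\lambda r_2}$ is $O(1)$ there), the product $\lambda e^{-\lambda\tau_u}R_2(\lambda;r_2)$ is still $O(|\lambda| e^{-\tau_u\Re\lambda})$, which is bounded on $\{\Re\lambda\ge 0\}$ and in fact tends to $0$ as $\Re\lambda\to+\infty$. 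So unlike the no-input-delay case — where $\rho(BK_dC)>1$ or an eigenvalue of $BK_dC$ outside $\mathrm{clos}(S)$ could push roots to $+\infty$ — here the delay damps the high-frequency derivative action, and no sequence of characteristic roots can escape to $+\infty$ in the right half-plane regardless of $K_d$. This is what makes the extra constraint on $K_d$ unnecessary; it is also the place I expect to need the most care, because I must show the a priori bound on characteristic roots in a strip $V=\{\Re\lambda>-c\}$ holds \emph{uniformly} in the perturbation parameters $r_1,r_2,r_3\in(0,\hat r)$, using the uniform bound $M$ from item 5.

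Then the argument closes exactly as before: on $V$ one rewrites $H(\lambda;r_1,r_2,r_3)=0$ by factoring out the (invertible, for $\lambda$ in the relevant bounded region and $r_i$ small) neutral-type leading coefficient — here there is no neutral term since the $\lambda K_d$ contribution comes with $e^{-\lambda\tau_u}$, so actually the situation is cleaner and the leading matrix is simply $\lambda I$ and the whole perturbation is lower-order — to obtain that any root in $V$ is bounded in modulus by some finite $\Xi$ depending only on $N$, $M$, the system data and $\tau_u$, but not on $r_1,r_2,r_3$. Since the perturbed characteristic function converges uniformly to the nominal one \eqref{eq:characeristic_function_delayed_feedback} (with $r=\tau_u$) on the compact region $\Omega=V\cap\{|\lambda|\le\Xi\}$ as $r_1,r_2,r_3\searrow 0$ — by item 4 of \Cref{ass:perturbations} — Rouché's theorem gives that the perturbed and nominal characteristic functions have the same number of roots in $\Omega$, hence in $V$, for all sufficiently small $r_1,r_2,r_3$. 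Choosing $c$ larger than the spectral abscissa of the (stable) nominal closed loop makes that number zero, which is precisely exponential stability of the perturbed system, and hence strong stability of \eqref{eq:sys_input_delay}-\eqref{eq:pid_finite_difference}. The one subtlety to flag is that the proposition as stated only asserts robustness against the finite-difference perturbation \eqref{eq:pid_finite_difference}; if the intended claim is the full \Cref{ass:perturbations} class one simply carries all three $R_i$ through, and the same boundedness estimate — now using $\|R_i\|\le M$ for each $i$ — still goes through because the $e^{-\lambda\tau_u}$ factor continues to absorb the lone factor of $\lambda$.
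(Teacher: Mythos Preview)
Your central boundedness claim is wrong, and the argument does not close without it. You assert that $\lambda e^{-\lambda\tau_u}R_2(\lambda;r_2)$ is $O(|\lambda|e^{-\tau_u\Re\lambda})$, ``which is bounded on $\{\Re\lambda\ge0\}$.'' But along the imaginary axis $\lambda=\jmath\omega$ one has $|e^{-\jmath\omega\tau_u}|=1$, so $|\lambda e^{-\lambda\tau_u}|=|\omega|$, which is unbounded. The input delay damps only in the direction of increasing $\Re\lambda$; it does nothing along vertical lines. Hence for a generic $R_2$ satisfying merely $\|R_2\|\le M$ from item~5 of \Cref{ass:perturbations}, the contribution $\lambda\,BR_1e^{-\lambda\tau_u}K_dR_2R_3C$ is genuinely unbounded on the strip $V$, the perturbed characteristic function remains of neutral type, and the a~priori bound $|\lambda_0|\le\Xi$ does not follow. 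To run the Rouch\'e argument you would instead need $I-BR_1e^{-\lambda\tau_u}K_dR_2R_3C$ invertible with uniformly bounded inverse on $V$, and $\rho(BK_dC)<1$ alone does not give this once the $R_i$ are interposed.

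The paper resolves this differently, and the discrepancy stems from what is almost certainly a typo in the statement: the conclusion should reference the low-pass-filtered controller \eqref{eq:pid_lowpass_filter}, not \eqref{eq:pid_finite_difference}, as the surrounding text and the proof itself make explicit. With the filter present, the derivative term in the characteristic function $H_6$ carries the factor $\tfrac{\lambda}{\lambda T+1}=\tfrac{1}{T}\big(1-\tfrac{1}{\lambda T+1}\big)$, which \emph{is} uniformly bounded on $V$; the bound on characteristic roots then follows exactly as in \Cref{prop:strong_stability}, with no appeal to $e^{-\lambda\tau_u}$ as a damping factor. The input delay enters only indirectly: nominal stability of \eqref{eq:sys_input_delay}--\eqref{eq:pid_control} forces $\rho(BK_dC)<1$ (by the argument of \Cref{prop:delayed_feedback}), and this in turn guarantees $\rho\big(BK_dC\,e^{-\lambda\tau_u}\tfrac{1}{\lambda T+1}\big)<1$ on $V$ for small $T$, so the filter itself is never destabilizing---which is precisely why no additional Hurwitz constraint on $BK_dC-I$ is needed here, in contrast to \Cref{prop:strong_stability}.
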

\begin{proof}
	By incorporating the low-pass filter and the perturbations $\{R_{i}(\lambda;r_i)\}_{i=1}^{3}$, the characteristic function becomes:
	\[
\begin{array}{>{$}p{.9\textwidth}<{$}}
H_6(\lambda;T,r_1,r_2,r_3) :=\det
\Bigg(
\lambda 
\begin{bmatrix}
I_n  & 0\\
0 & I_q 
\end{bmatrix}
- 
\begin{bmatrix}
A_0 & 0 \\
0 & 0
\end{bmatrix}
-  \displaystyle\sum_{k=1}^{K} \begin{bmatrix}
A_k & 0 \\
0 & 0
\end{bmatrix}e^{-\lambda \tau_k} \\
\hfill
\begin{bmatrix}
BR_1(\lambda;r_1)e^{-\lambda \tau_u}& 0 \\
0 & I_q
\end{bmatrix}
\begin{bmatrix}
K_p+\frac{\lambda}{\lambda T+1} K_d R_2(\lambda;r_2) & U_i \\
V_i & 0
\end{bmatrix}
\begin{bmatrix}
R_3(\lambda;r_3) C & 0 \\
0 & I_q
\end{bmatrix}
\Bigg).
\end{array}
\]
	We have already established that the exponential stability of the closed-loop system of \eqref{eq:sys_input_delay} and \eqref{eq:pid_control} requires {$\rho(BK_dC)<1$}. Next, we will show that the condition  $\rho(BK_d C)<1$ implies that the low-pass filter itself is not destabilizing for $R_1(\lambda;r_1) = R_2(\lambda;r_2) = R_3(\lambda;r_3) = I$. If $\rho(BK_d C)<1$, there exist $\epsilon>0$ and $\hat{T}>0$ such that $\rho\left(BK_d C e^{-\lambda \tau_u}\frac{1}{\lambda T+1}\right)<1$ for all $\lambda \in V := \left\{\lambda\in\C:\Re(\lambda)>-\epsilon\right\}$ and all $T\in(0,\hat{T})$. This implies that $I-BK_d C e^{-\lambda \tau_u}\frac{1}{\lambda T+1}$ is invertible on $V$. It now follows from a similar argument as in \Cref{prop:low_pass} that the low-pass filter does not destabilize the system for sufficiently small $T$. Strong stability follows from a similar argument as in \Cref{prop:strong_stability}.
\end{proof}
The proposition above shows that if a PID controller stabilizes the system \eqref{eq:sys_input_delay}, then including a  low-pass filter with a sufficiently large cut-off frequency makes the closed-loop system strongly stable. It thus suffices to design controller matrices $K_p$, $K_d$ and $K_i$ such that the closed-loop system of \eqref{eq:sys_input_delay} and \eqref{eq:pid_control} is stable. Such controller matrices can, for instance, be obtained using the method presented in \cite{Michiels2011}. More precisely, suitable controller matrices are derived by minimizing the spectral abscissa of the closed-loop system under the constraint that $\rho(B K_d C) < 1$. As in \Cref{sec:numerical_algorithm}, this constraint is incorporated in the cost function. However, instead of including a penalty term, a logbarier method is used to enforce that the constraint is fulfilled in each iteration step.

\begin{example}
\label{ex:input_delay}
  We revisit \Cref{ex:design_quadcopter} with a (fixed) feedback delay of $\tau_u = 0.1 \mathrm{s}$. Although $\rho(BK_d C)=0.4925<1$ for the controller obtained in \Cref{sec:numerical_algorithm}, the closed-loop system is unstable for $\tau_u = 0.1 \mathrm{s}$, both with and without filtering. By applying the procedure of \cite{Michiels2011} starting from this controller, we arrive at a stable closed-loop system with an exponentially decay rate (with $T=10
 ^{-6} \mathrm{s}$) of less than 0.0100 and $\rho(BK_d C)=0.9990<1$. However, when starting from another initial controller, created by swapping the initial $K_i$ and $K_d$, we arrived at an exponentially decay rate (with $T=10^{-6} \mathrm{s}$) of 1.1797, illustrating the non-convexity of the optimization problem and the importance of the starting values. The corresponding controller matrices are given in \Cref{appendix:gain_matrices_quadcopter}.
\end{example}

\subsection{Bounded uncertainties on system matrices and state delays}
In this subsection, we generalize the algorithm presented in \Cref{sec:numerical_algorithm} to systems with bounded uncertainties on the system matrices and the state delays. More specifically,  we consider $L$ real-valued matrix uncertainties 
\[
\delta_1 \in \R^{\varrho_1 \times \varsigma_1}, \dots, \delta_L \in \R^{\varrho_L \times \varsigma_L}
\]
that are bounded in Frobenius norm by 1. For notational convenience we denote the combination of uncertainties as $\delta = \big(\delta_1,\dots,\delta_L\big)$, and the set of all admissible uncertainty values as $\mathcal{D}$:
\[
\mathcal{D} = \left\{(\delta_1,\dots,\delta_L): \delta_{l} \in \R^{\varrho_l \times \varsigma_l} \text{ and } \|\delta_l\|_F \leq 1 \text{ for } l=1,\dots,L \right\}
\]
These matrix uncertainties affect the system matrices in an affine way:
\[
\widetilde{R}(\delta) = R + \sum_{l=1}^{L} G_{R,l} \delta_l H_{R,l}
\]
with $R$ the nominal value of the system matrix. The matrices $G_{R,l}$ and $H_{R,l}$ are real-valued and allow to target specific blocks or parameters in the system matrices. The uncertainties on the state delays, $\delta\tau_1$, \dots, $\delta\tau_K$, are real-valued and bounded in absolute value by $\overline{\delta\tau_k}<\tau_k$. The open loop system now becomes
\[
\left\{\begin{array}{l}
\dot x(t) = \widetilde{A}_0(\delta)\, x(t) + \sum_{k=1}^{K} \widetilde{A}_k(\delta) \, x\big(t-(\tau_k + \delta\tau_k)\big) +\widetilde{B}(\delta)\, u(t),\\
y(t)=\widetilde{C}(\delta)\, x(t). 
\end{array}\right.
\]
The goal is to design a PID controller with low-pass filtering that strongly stabilizes the system for all admissible uncertainty values. To this end, the the design procedure of \Cref{sec:numerical_algorithm} has to be modified in the following way. Firstly, the constraint $\alpha(\widetilde{B}(\delta) K_d \widetilde{C}(\delta))<1$ now has to hold for all admissible uncertainty values or in other words,
\begin{equation}
\label{eq:constraint_uncertainty}
\alpha^{\mathrm{ps}}(\widetilde{B} K_d\widetilde{C}) = \max \left\{\Re(\lambda): \exists \delta \in \mathcal{D} \text{ such that } \det\big(\lambda I_m- \widetilde{B}(\delta) K_d \widetilde{C}(\delta)  \big)=0 \right\} < 1.
\end{equation}
Secondly, instead of minimizing the spectral abscissa, we now have to minimize the pseudo-spectral abscissa of the characteristic function, which is defined as the worst-case value of the spectral abscissa over all admissible uncertainty values,
\begin{multline}
    \label{eq:objective_function_uncertainty}
	f(K_p,K_d,K_i) = \max_{\lambda\in\mathbb{C}} \Bigg\{\Re(\lambda): \exists \delta \in \mathcal{D} \text{ and } |\delta\tau_k|<\overline{\delta\tau_k} \text{ for } k=1,\dots K \text{ such that }  \\ \textstyle \det \Bigg(\lambda \big(I_n\minus\widetilde{B}(\delta) K_d \widetilde{C}(\delta)\big) \minus  \widetilde{A}_0(\delta)  
	-\! \sum\limits_{k=1}^{K} \widetilde{A}_k(\delta) e^{-\lambda (\tau_k+\delta\tau_k)}  \minus \widetilde{B}(\delta) K_p \widetilde{C}(\delta) \minus \frac{\widetilde{B}(\delta) K_i \widetilde{C}(\delta)}{\lambda} \Bigg)\!=0  \Bigg\}
\end{multline}

By noting that the characteristic functions in \eqref{eq:constraint_uncertainty} and \eqref{eq:objective_function_uncertainty} can be rewritten
as
\[
\det\left(\lambda\begin{bmatrix}
I_n & 0 & 0 \\
0 & 0 & 0 \\
0 & 0 & 0
\end{bmatrix}-
\begin{bmatrix}
0 & \widetilde{B}(\delta) & 0 \\
0 & - I_m & K_d \\
\widetilde{C}(\delta) & 0 & -I_p
\end{bmatrix}\right)
\]
and
\[
\begin{array}{>{$}p{.9\textwidth}<{$}}
\det\left(
\lambda
\begin{bmatrix}
I_n & 0 & 0 & 0 & 0 & 0 & 0 \\
0   & 0 & 0 & 0 & 0 & 0 & 0 \\
0   & 0 & 0 & 0 & 0 & 0 & 0 \\
0   & 0 & 0 & 0 & 0 & 0 & 0 \\
0 & 0 & 0 & 0 & I_q & 0 & 0 \\
0   & 0 & 0 & 0 & 0 & 0 & 0 \\
I_n & 0 & 0 & 0 & 0 & 0 & 0
\end{bmatrix}
-
\begin{bmatrix}
\tilde{A}_0(\delta) & \tilde{B}(\delta) & \tilde{B}(\delta) & 0 & 0 & 0 & 0 \\
0 & - I_m & 0 & K_p & U_i & 0 & 0 \\
0 & 0 & - I_m & 0 & 0 & K_d & 0\\
\tilde{C}(\delta) & 0 & 0 & - I_p & 0 & 0 & 0 \\
0 & 0 & 0 & V_i & 0 & 0 & 0\\
0 & 0 & 0 & 0 & 0 & - I_p & \tilde{C}(\delta) \\
0 & 0 & 0 & 0 & 0 & 0 & I_n
\end{bmatrix}\right. - \\[45pt] \hfill
\displaystyle\left.\sum_{k=1}^{K}
\begin{bmatrix}
\tilde{A}_k(\delta) & 0 & 0 & 0 & 0 & 0 & 0 \\
0 & 0 & 0 & 0 & 0 & 0 & 0\\
0 & 0 & 0 & 0 & 0 & 0 & 0\\
0 & 0 & 0 & 0 & 0 & 0 & 0\\
0 & 0 & 0 & 0 & 0 & 0 & 0\\
0 & 0 & 0 & 0 & 0 & 0 & 0\\
0 & 0 & 0 & 0 & 0 & 0 & 0
\end{bmatrix} e^{-\lambda (\tau_k+\delta\tau_k)}\right),
\end{array}
\]
respectively, both \eqref{eq:constraint_uncertainty} and \eqref{eq:objective_function_uncertainty} can be computed using a slight modification of the method presented in \cite{borgioli2019,borgioli2020}. Furthermore, explicit expressions for the derivative of these quantities with respect to the elements of $K_p$, $K_d$ and $K_i$, which are required in the optimization process, can be obtained using \cite[Theorem~4.2]{borgioli2020}.

\section{Concluding remarks}
\label{sec:conclusion}
This work analyzed the potential sensitivity of stability for PID control of dynamical systems with multiple state delays with respect to arbitrarily small modelling errors. This led to the introduction of the new notion of \emph{strong stability}, which requires the preservation of stability under sufficiently small perturbations. We showed that, under a certain algebraic constraint on the derivative gain matrix $K_d$, strong stability can be achieved by adding a low-pass filter to the control loop. A computational procedure to design strongly stabilizing PID controllers was provided. A \textsc{Matlab\textsuperscript{TM}} implementation of this algorithm is available from \url{www.}. The code used to generate \Cref{ex:design_1,ex:design_state_delay,ex:design_quadcopter} is available from the same location.
\section*{Acknowledgments}
This work was supported by the project C14/17/072 of the KU Leuven Research Council, by the project G0A5317N of the Research Foundation-Flanders (FWO - Vlaanderen). The authors are also members of the International Research Network (IRN) \emph{Distributed Parameter Systems with Constraints\/} ("Spa-DisCo") financially supported by the French CNRS and various European institutions and universities (including KU Leuven, Belgium) for the period 2018-2021.

\bibliography{main.bib}
\appendix

 \section{Proof that system \eqref{eq:example_3rd_order} is not stabilizable with P or PI control}
 \label{appendix:example_3rd_order_PI}
 
 \textbf{Case I: P control ($k_i=0$)}
 In this case, the characteristic polynomial becomes:
 \[
 \lambda^3+(1-k_p)\lambda^2-\lambda/3-1-k_p.
 \]
 The result follows immediately from the Routh-Hurwitz conditions for polynomials of degree three.
 
\noindent \textbf{Case II: PI control ($k_i\neq 0$)}
 In this case, the characteristic polynomial becomes:
 \[
 \lambda^4+(1-k_p)\lambda^3+(-1/3-k_i)\lambda^2+(-1-k_p)\lambda-k_i.
 \]
 The corresponding Routh-Hurwitz conditions for polynomials of degree four are:
 \begin{align}
 k_p <1 \label{rh_cond1}\\
 k_i <-1/3 \\
 k_p <-1 \label{rh_cond3}\\
 k_i <0 \label{rh_cond4}\\
 2/3-k_i+(4/3) k_p + k_p k_i > 0 \\
 -2/3-2k_p+2k_i-2k_i k_p-4/3 k_p^2 >0 \label{rh_cond6}
 \end{align}
 Condition \eqref{rh_cond6} implies that 
 \[
 -(2/3)-2k_p-4/3k_p^2 > 2 k_i(k_p-1).
 \]
 Furthermore, because $k_p-1<0$ due to \eqref{rh_cond1}, this is equivalent with
 \[
 \frac{-(2/3)(k_p+1/2)(k_p+1)}{k_p-1}<k_i.
 \]
 Because the right hand side of this inequality is strictly positive for $k_p<-1$, it follows from \eqref{rh_cond3} that $k_i$ must be strictly larger then $0$. This contradicts however \eqref{rh_cond4}.
 \section{System definition and resulting feedback matrices for \Cref{ex:design_state_delay}}
\label{appendix:design_state_delay_system_definition}

The system is defined by
\[
A_0 = \begin{bmatrix}
  -0.3430 & 0.6843 & -0.1278 & 0.4078 & -0.7793 & -1.4286 \\
    1.6663 & 0.0558 & -1.4103 & 0.2430 & -1.7622 & -1.1146 \\
   -0.7667 & -1.4018 & 0.6029 & 0.3975 & -1.9355 & 0.9132 \\
    2.6355 & -1.3601 & -0.4569 & -0.1757 & 1.5269 & 0.9764 \\
   -0.0168 & -1.5217 & -0.1397 & -0.3175 & 0.6787 & -1.5769 \\
    0.3042 & 1.0547 & -0.9833 & -1.1016 & -2.2772 & 0.2041
\end{bmatrix},
\]
\[
A_1 = \begin{bmatrix}
   -1.1636 & -0.0632 & -0.0153 & 0.1706 & 1.0161 & 0.3321 \\
    0.4537 & 0.3837 & -1.5704 & 1.0775 & 1.0633 & -1.2500 \\
    0.6882 & -0.1188 & -0.6172 & 0.1081 & -0.9434 & -0.8816 \\
    0.4581 & 0.4896 & -0.7158 & 0.2237 & -0.2411 & -0.2983 \\
    0.9957 & 0.0992 & -0.1938 & 0.4602 & -0.9461 & -0.2692 \\
   -1.0270 & 0.3322 & 0.6574 & 0.5190 & 0.0591 & 0.3468
\end{bmatrix},
\]
\[
A_2 = \begin{bmatrix}
   -0.1620 & -0.7600 & 0.2185 & 0.6680 & -0.0869 &  0.2811 \\
    0.3548 & 1.1226 & -0.1035 & 0.0191 & 0.8869 & -0.1554 \\
   -0.0062 & -0.1772 & -0.5372 & -0.1446 & 0.4768 & -0.2087 \\
    0.1849 & -0.5384 & 0.2619 & 0.4180 & -0.8080 &  0.7966 \\
    1.0530 & -0.2093 & -0.4522 & -0.6498 & -0.9190 &  0.9116 \\
    0.2829 & 0.0122 & -0.0370 & 0.0390 & -0.0724 & -0.0892 \\
\end{bmatrix},
\]
\[
A3 = \begin{bmatrix}
        2.4775 & 1.0257 & 1.5891 & 0.8326 & -1.3239 & 1.7045 \\
   -0.3964 & 1.2449 & -0.4890 & -1.5695 & -0.8312 & 0.5800 \\
   -2.2922 & 2.2344 & -2.0149 & -0.2128 & 0.6352 & 2.5780 \\
    0.1915 & 1.4213 & -0.8818 & 1.5595 & -1.1228 & -1.1024 \\
    0.8404 & 0.6028 & -1.7115 & -0.7325 & -1.8020 & 2.0994 \\
    0.8771 & 1.2225 & -0.6453 & -2.3762 & 2.2157 & 0.2430 
\end{bmatrix},
\]
\[
B = \begin{bmatrix}
   -0.7928 & -0.1048 & 1.7634 \\
    0.3175 & 0.8241 & -0.0609 \\
    0.3013 & -1.0316 & -0.1341 \\
    0.8311 & -1.2573 & 0.5240 \\
   -2.4712 & 0.2176 & 0.0554 \\
    0.4338 & -0.4529 & 1.1995
\end{bmatrix},
\]
\[
C = \begin{bmatrix}
    1.5959 & -0.8172 & 0.7905 & 1.9030 & -0.3477 & -0.9110 \\
   -0.0133 & 0.1929 & 0.4005 & -1.1900 & -0.2924 & -0.5558 
\end{bmatrix},
\]
$\tau_1 = 0.11$, $\tau_2 = 0.21$ and $\tau_3 = 1$.

The designed gain matrices are
\[
K_p = \begin{bmatrix}
2.8615 & 10.5346 \\
3.9254 & 5.3124 \\
-3.9750 & 13.8184
\end{bmatrix}, \quad
K_d = \begin{bmatrix}
1.7447 & -5.2424 \\
    4.7113 & 3.9833 \\
    0.9434 & 12.2535
\end{bmatrix} \text{and}
\quad
K_i = \begin{bmatrix}
0.6519 & 0.0412 \\
    2.3498 & -2.5687 \\
    1.0265 & 1.5849
\end{bmatrix}.
\]
\section{Resulting gain matrices for \Cref{ex:design_quadcopter,ex:input_delay}}
\label{appendix:gain_matrices_quadcopter}
For \Cref{ex:design_quadcopter}, the resulting gain matrices are
\[
K_p = \begin{bmatrix}
 -5.4621 & -37.8386  & 50.3201  &  2.2475 & -81.8761 & -81.3841  & 20.7403  & 20.9480 \\
  -43.1886 & -19.1534 &  28.1863 & -49.3413 &  -5.5520 &  -3.2277 & -23.3179 & -28.9068 \\
  -13.8907 & -20.9904 &  43.0970  &  3.3196 &  66.3708 &  64.3681 &  32.7644 &  44.4112 \\
   17.0573 &   7.0390 &   9.8189 &  73.1320 &  21.4069 &  16.2811 & -22.8678 & -12.1369
\end{bmatrix},
\]
\[
K_d = \begin{bmatrix}
      2.6020 & -35.2924 &  30.7326  &  4.6261 & -26.2670 & -44.9325 &  19.4149 &  11.9269 \\
  -46.6939 & -28.3477 & 24.5842 & -47.1050 & -52.6639  &  0.5357 & -26.7642 & -12.0257 \\
  -25.7283 &  5.1204 & 21.7455 &  16.7965 &  19.4925 &  48.8419 &  43.1543  & -1.4350 \\
   33.9343 & 13.0930 & 26.7705 &  62.2667 &  45.2039 &  -3.5705 & -10.7538  &  8.9672
\end{bmatrix}
\]
and
\[
K_i = \begin{bmatrix}
   20.3716  & -4.9418 &  23.0753  &  9.2952  & -5.5020  & -0.1435 &  10.1706 &  20.7039 \\
  -15.7922 & -15.7094 & -12.4776 & -32.2340  & -5.9862  &  5.4981 & -16.9445 & -23.1467 \\
  -35.2062  &  0.9211 &   1.7405 &   6.6778  &  0.9162  &  1.9864 &  11.1697 &  32.2109 \\
   -6.5286  & 10.7784 & -21.8867  & 36.4701  &  0.9330  & -2.7628 & -24.3316 & -23.2082
\end{bmatrix}. 
\]
For \Cref{ex:input_delay}, the resulting gain matrices are
\[
K_p = \begin{bmatrix}
16.6430 & -63.3128 &  62.1810  &  4.4335 & -66.8876 & -68.0598 &  16.3207 &  16.5152 \\
  -50.8533 & -19.3299 &  30.2406 & -36.1548 &  -9.8565 & -12.9958 & -31.7057 & -26.1467 \\
  -50.0849  &  5.6587 &  42.9924 &  13.2329 &  58.8861 &  52.4494 &  55.4277 &  42.7279 \\
   10.3170  & 15.6469 &   8.9039 &  75.0145 &  19.3511 &  16.4770 & -32.1567 &  -8.2343
\end{bmatrix},
\]
\[
K_d = \begin{bmatrix}
   24.1572 & -13.2747 &  29.8087 &  17.3038 &   4.6970 & -33.3402 &   5.7379 &  -1.0075 \\
  -25.7786 &   4.0052 &  -9.0192 &  -1.2406 & -19.6114 & -13.1057 & -14.1845 &  -4.0257 \\
  -44.9690 &   8.0431 &  14.2946 &   8.2878 &   6.5073 &   0.3612 &   9.4864 &  -1.4291 \\
  -16.8650 &  -2.2785 &   3.3891 &  26.6494 &  11.8299 & -24.9899 & -20.4291 &  -1.1029
\end{bmatrix}
\]
and
\[
K_i = \begin{bmatrix}
3.7628 & -33.6333 &  37.4261 &  13.1179 & -27.1173 & -45.3188 & 15.8870 & 7.5073 \\
  -45.3248 & -37.6528 &  26.1769 & -52.8919 & -50.6522 & 1.5547 & -25.0178 & -20.4135 \\
  -18.7744  & -0.9240 &  14.8563 &   7.1468 &  20.2193 & 49.8381 &  41.4465 &  21.2283 \\
   28.9560 &  27.5731 &  27.6152 &  63.2191 & 43.8716 & -2.5157 &  -6.3298 &  -0.3217
\end{bmatrix}.
\]
\end{document}